\newtheorem{lemma}{Lemma}[section]
\newtheorem{thm}[lemma]{Theorem}
\newtheorem{prop}[lemma]{Proposition}
\newtheorem{corol}[lemma]{Corollary}
\newtheorem{conj}[lemma]{Conjecture}
\newtheorem{example}[lemma]{Example}
\newtheorem{ques}[lemma]{Question}
\title{A relation on $132$-avoiding permutation patterns}
\author{NATALIE AISBETT}
\address{School of Mathematics and Statistics\\
University of Sydney, NSW, 2006\\
Australia}
\email{N.Aisbett@maths.usyd.edu.au}
\date{}
\begin{document}
\maketitle{}
\numberwithin{figure}{section}
\numberwithin{equation}{section}

\begin{abstract}
Rudolph conjectures in \cite[Conjecture 21]{ru} that for permutations $\tau$ and $\mu$ of the same length, for all $n$ we have $A_n(\tau) \le A_n(\mu)$, if and only if the spine structure of $T(\tau)$ is less than or equal to the spine structure of $T(\mu)$ in refinement order. We prove one direction of \cite[Conjecture 21]{ru}, by showing that if the spine structure of $T(\tau)$ is less than or equal to the spine structure of $T(\mu)$, then for all $n$ $A_n(\tau) \le A_n(\mu)$. We disprove the opposite direction by giving a counterexample, and hence disprove the conjecture.  
\end{abstract}

\begin{section}{Introduction}

Let $[n]$ denote the set of integers $\{1,2,\ldots,n\}$. Let $\sigma = \sigma_1\ldots\sigma_n$ be a permutation of $[n]$ written in one-line notation. The permutation $\sigma$ \emph{contains} the pattern $\tau = \tau_1\ldots\tau_k$ if there exists $1 \le i_1 \le i_2 \le \cdots \le i_k \le n$ such that for any $1 \le s,t \le k$ we have  $\sigma_{i_s} \le \sigma_{i_t}$ when $\tau_{i_s} \le \tau_{i_t}$. A permutation pattern $\tau = \tau_1\ldots\tau_k$ is of \emph{length} $k$. A $132$-avoiding permuation is a permutation that does not contain the pattern $132$. We denote the set of all permutations of $[n]$ that are $132$-avoiding by $S_n(132)$.\\

Suppose that $\tau = \tau_1\ldots\tau_k$ is a permutation of length $k$ for some integer $k \le n$, and suppose that $\sigma \in S_n(132)$. Let $f(\sigma,\tau)$ denote the number of copies of $\tau$ in $\sigma$. For example $f(53421,321) = 7$, since the subsequences $532$, $531$, $542$, $541$, $321$, $521$ and $421$ give rise to the pattern $321$. The \emph{popularity} of a pattern $\tau \in S_k(132)$, in length $n$ $132$-avoiding permutations, denoted $A_n(\tau)$, is given by 
$$A_n(\tau): = \sum_{\sigma \in S_n(132)}f(\sigma,\tau).$$
Permutations $\tau$ and $\mu$ each of length $k$ are \emph{equipopular} if for all $n$, $A_n(\tau) = A_n(\mu)$. Note that if $132$-avoiding permutation contains a pattern, then that pattern must also be a $132$-avoiding permutation. There is 0!=1 permutation of the empty set, which is of length 0. It certainly avoids 132, and hence belongs to $S_0(132)$, so that $|S_0(132)| = 1$. When $\tau$ is the length 0 permutation, we take by convention $A_n(\tau) = 1$.\\

The question of pattern popularity within permutations that avoid some pattern was first asked by Joshua Cooper on his webpage \cite{co}. He raised the equivalent question: what is the expected number of occurrences of a pattern in a randomly chosen pattern avoiding permutation? This problem was first tackled by B\'ona in \cite{bo}, who showed that for all permutation patterns $\tau$ of length $k$, and for any $n \ge k$, we have 
$$A_n(12\ldots k) \le A_n(\tau) \le A_n(k(k-1)\ldots1).$$ He then extended this result in \cite{botwo} to show that for all $n \ge 3$
$$A_n(213) = A_n(231) = A_n(312).$$ 
In \cite{ru} Rudolph extends the results of B\'ona, to show that: 

\begin{thm}[{\cite[Theorem 16]{ru}}]
For any $132$-avoiding permutations $\tau$ and $\mu$ of length $k$, if $T(\tau)$ and $T(\mu)$ have the same spine structure, then for all $n \ge k$ we have $A_n(\tau) = A_n(\mu)$. \label{hgchcv}\end{thm}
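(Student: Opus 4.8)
The plan is to compute the popularity generating function $G_\tau(x):=\sum_{n\ge 0}A_n(\tau)\,x^n$ for each $132$-avoiding pattern $\tau$ and to show that it depends only on the spine structure of $T(\tau)$; since $A_n(\tau)$ is the coefficient of $x^n$ in $G_\tau$, equality of these series is exactly the assertion $A_n(\tau)=A_n(\mu)$ for all $n$. The main tool is the recursive structure of $S_n(132)$ underlying the tree bijection: if the maximum of $\sigma\in S_n(132)$ sits at position $p$, then $\sigma=\sigma^L\,n\,\sigma^R$ where every entry of $\sigma^L$ exceeds every entry of $\sigma^R$, and $\sigma^L,\sigma^R$ are themselves $132$-avoiding and correspond to the left and right subtrees of the root of $T(\sigma)$. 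I would let $C(x)=\sum_n|S_n(132)|x^n$ be the Catalan series, so that $C=1+xC^2$, and build $G_\tau$ on top of $C$.

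First I would set up a recursion for $f(\sigma,\tau)$ by classifying occurrences of $\tau$ according to their interaction with the maximum $n$ of $\sigma$. Writing $\tau=\tau^L\,k\,\tau^R$ at its maximum $k$, an occurrence that uses $n$ must send $k$ to $n$, and then, since every entry left of the maximum of a $132$-avoider exceeds every entry to its right, it splits as an occurrence of $\tau^L$ in $\sigma^L$ times an occurrence of $\tau^R$ in $\sigma^R$. An occurrence that avoids $n$ lies in the skew sum $\sigma^L\ominus\sigma^R$, and because the entries of $\sigma^L$ all lie above those of $\sigma^R$, such an occurrence corresponds to a splitting of $\tau$ as a skew sum $\tau=\alpha\ominus\beta$ at one of the cut points of its skew decomposition $\tau=\beta_1\ominus\cdots\ominus\beta_r$, contributing an occurrence of $\alpha$ in $\sigma^L$ times one of $\beta$ in $\sigma^R$. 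Summing over $\sigma$ and passing to generating functions turns this into
\begin{equation}
G_\tau \;=\; x\Big(G_{\tau^L}G_{\tau^R}+\sum_{j=0}^{r}G_{\beta_1\ominus\cdots\ominus\beta_j}\,G_{\beta_{j+1}\ominus\cdots\ominus\beta_r}\Big),\qquad G_{\varnothing}=C .
\end{equation}
I would then solve for $G_\tau$ by isolating the $j=0$ and $j=r$ terms, each of which is $C\,G_\tau$, to obtain $G_\tau(1-2xC)=\cdots$. This expresses $G_\tau$ through the series attached to the subtrees of $T(\tau)$ and to the skew (right-spine) blocks $\beta_i$, and one checks that $r$ is the length of the right spine of $T(\tau)$ while $\beta_i$ records the $i$-th spine node together with its left subtree.

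Finally I would prove, by induction on $|\tau|$, that $G_\tau$ is determined by the spine structure of $T(\tau)$. The inductive step is to show that for $\tau$ and $\mu$ with the same spine structure the right-hand side of the recursion agrees; this reduces to the structural claim that the operations appearing there are compatible with the spine structure, namely that the spine structures of $T(\tau^L)$ and $T(\tau^R)$, and of all partial skew sums $\beta_1\ominus\cdots\ominus\beta_j$ and $\beta_{j+1}\ominus\cdots\ominus\beta_r$, are functions of the spine structure of $T(\tau)$ alone. The hard part will be exactly this reconciliation: the recursion mixes the left-subtree term $G_{\tau^L}G_{\tau^R}$ with the right-spine convolution $\sum_j G_{\beta_{\le j}}G_{\beta_{>j}}$, so the two distinct ways of cutting the tree must both be controlled by the single invariant given by the spine structure. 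I would handle this by proving a separate compatibility lemma for the partial skew sums and by strengthening the induction to a statement about the whole family of series indexed by spine structures, so that the recursion manifestly preserves ``same spine structure $\Rightarrow$ same series''; equivalently one rewrites the solved recursion as a convolution along the spine in which each factor depends only on the (recursively described) spine data of the corresponding hanging subtree. As sanity checks I would use the symmetry $A_n(\tau)=A_n(\tau^{-1})$, which already forces coincidences such as the equipopularity of $231$ and $312$, and the small cases $k\le 3$, where $G_\tau$ collapses to a monomial $x^kC^aP^b$ with $P=(1-2xC)^{-1}$ and the matching of the exponents $a,b$ to the spine structure can be verified directly before confronting the genuinely non-monomial behaviour that appears from length four onward.
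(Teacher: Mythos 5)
This statement is Rudolph's \cite[Theorem 16]{ru}; the present paper quotes it without proof, so there is no internal argument to compare against and I can only assess your proposal on its own terms. Your setup is correct: decomposing $\sigma=\sigma^L\,n\,\sigma^R$ at the maximum, splitting occurrences of $\tau$ according to whether they use $n$, and arriving at $G_\tau=x\bigl(G_{\tau^L}G_{\tau^R}+\sum_{j=0}^{r}G_{\beta_1\ominus\cdots\ominus\beta_j}G_{\beta_{j+1}\ominus\cdots\ominus\beta_r}\bigr)$ with $G_\varnothing=C$ is a valid recursion (it is the same kind of decomposition this paper uses in proving Theorem \ref{tfersx} and Proposition \ref{gtfrcjhy}), and your identification of $r$ with the size of the spine containing the root, and of $\beta_i$ with the $i$-th vertex of that spine together with its left subtree, is also right.

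The gap is the inductive step. The structural claim you need --- that the spine structures of $T(\tau^L)$, $T(\tau^R)$ and of the partial skew sums $\beta_1\ominus\cdots\ominus\beta_j$ are determined by the spine structure of $T(\tau)$ alone --- is false. Take $\tau=2341$ and $\mu=2134$, both in $S_4(132)$ and both with spine structure $\langle 2,1,1\rangle$. Then $2341=123\ominus 1$ has $r=2$, $\tau^R=1$, and $T(\tau^L)=T(12)$ with spine structure $\langle 1,1\rangle$; whereas $2134$ is skew-indecomposable, so $r=1$, $\mu^R=\varnothing$, and $T(\mu^L)=T(213)$ has spine structure $\langle 2,1\rangle$. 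The right-hand sides of your recursion for these two patterns are therefore built from entirely different pieces, and no termwise comparison is available; the proposed remedy (``strengthen the induction to the whole family of series indexed by spine structures'') is a restatement of the theorem rather than an argument. To close the gap along these lines you would have to actually solve the recursion to a closed form for $G_\tau$ that visibly depends only on $S(T(\tau))$, or else replace the global induction by local popularity-preserving tree moves (reduction to a left-justified normal form and reordering of spines), each justified by an identity of the above type --- which is essentially Rudolph's route, and is where all the real work of the theorem lies.
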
 In \cite{ru} Rudolph conjectures the following:

\begin{conj}[{\cite[Conjecture 21]{ru}}] Given patterns $\tau$ and $\mu$, for all $n$ we have $A_n(\tau) \le A_n(\mu)$, if and only if the spine
structure of $T(\tau)$ is less than or equal to the spine structure of $T(\mu)$ in refinement order. \label{hytdc}
\end{conj}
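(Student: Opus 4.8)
The statement is an equivalence, and only one of its two implications is true; accordingly my plan is to prove ``refinement of spine structures $\Rightarrow$ popularity inequality'' and to refute the converse by an explicit computation, the two directions being handled by entirely different methods.

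For the forward implication, the first move is to invoke Theorem~\ref{hgchcv}: since $A_n(\tau)$ depends only on the spine structure of $T(\tau)$, I may write $A_n(\lambda)$ for the common popularity shared by every pattern whose spine structure is the partition $\lambda$, so that the assertion collapses to the purely order-theoretic claim that $\lambda \le \mu$ in refinement order forces $A_n(\lambda) \le A_n(\mu)$ for all $n$. Refinement order is generated by its covering relations, and a partition $\mu$ covers $\lambda$ precisely when $\mu$ arises from $\lambda$ by amalgamating two parts into one; hence, by transitivity of $\le$, it is enough to establish the inequality across a single such merge. For a given covering pair I would pick representative $132$-avoiding permutations realizing $\lambda$ and $\mu$ that agree as far as possible, changing only the portion of the tree where the two spines are fused.

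The core of the argument is then the nonnegativity $A_n(\mu) - A_n(\lambda) \ge 0$ for one merge. My preferred route is to pass to the generating functions $P_\lambda(x) := \sum_n A_n(\lambda)\, x^n$ and to prove the stronger coefficientwise statement that $P_\mu(x) - P_\lambda(x)$ has only nonnegative coefficients. To get a usable formula for $P_\lambda$, I would count occurrences recursively through the standard decomposition of a $132$-avoiding permutation $\sigma = L\, n\, R$ at its maximum entry $n$, in which every entry of $L$ exceeds every entry of $R$; resolving an occurrence of $\tau$ according to whether it uses $n$ and how its remaining entries are distributed between $L$ and $R$ produces a functional equation for $P_\tau$ in terms of the Catalan series $C(x) = 1 + x\,C(x)^2$ and the tree $T(\tau)$, whose solution depends, in accordance with Theorem~\ref{hgchcv}, only on the multiset of spine lengths. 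Once $P_\lambda$ is in closed form, a single merge perturbs it in a controlled way, and it remains to read off the sign of the difference, ideally from a factorization of $P_\lambda$ over its parts or from a manifestly nonnegative combinatorial interpretation of the coefficients.

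I expect this nonnegativity to be the principal obstacle. B\'ona's base case $A_n(12) \le A_n(21)$ corresponds to the single merge $(1,1) \lessdot (2)$ of length two, but a general merge fuses two blocks of arbitrary size, and controlling the sign of the coefficient difference uniformly in $n$ will demand either a genuinely clean closed form for $P_\lambda$ or an injection pairing, summed over all $\sigma \in S_n(132)$, the occurrences of the two patterns. For the converse I would instead compute the popularity polynomials $A_n(\lambda)$ for every spine structure up to a modest length and search for two partitions that are incomparable under refinement yet whose popularity sequences are comparable for all $n$; incomparable partitions first appear at length four, with $(2,2)$ and $(3,1)$, so a finite computation suffices to locate the smallest such pair and thereby disprove Conjecture~\ref{hytdc}.
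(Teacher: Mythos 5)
Your architecture coincides with the paper's --- use Theorem \ref{hgchcv} to make popularity a function of the spine structure alone, reduce the forward implication to a single merge of two parts by transitivity of refinement order, and refute the converse with the incomparable pair $\langle 2,2\rangle$, $\langle 3,1\rangle$ at length four --- but in both directions the step that carries the actual mathematical weight is left unexecuted. For the forward direction you never prove the single-merge inequality: you name two possible routes (a closed form for $P_\lambda(x)$ with a coefficientwise comparison, or an occurrence-level injection) and concede that this is ``the principal obstacle,'' but you construct neither. The paper's resolution is to define an explicit operation $\tau \mapsto \tau^{\tau_j}$ on $132$-avoiding patterns (moving the entry $\tau_j=\tau_i+1$ to just before $\tau_i$), to show via Propositions \ref{ghtdyhdh}, \ref{uigouih} and \ref{juyfc} that this operation merges exactly two spines and that every merge of two parts is realizable this way, and then to prove $A_n(\tau)\le A_n(\tau^{\tau_j})$ (Theorem \ref{tfersx}) by a double induction on $(k,n)$: occurrences not containing $n$, and those in which $n$ occurs as an element of $L$ or $R$, are counted by convolution sums of the form $\sum_\alpha A_\alpha(\cdot)A_{n-1-\alpha}(\cdot)$ and compared termwise by the inductive hypothesis, while occurrences in which $n$ plays the role of $\tau_j$ require a bespoke injection $\psi$ built from the $\oplus/\ominus$ decomposition of $\sigma$ at its maximum. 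Without some substitute for this construction, your forward implication is an outline rather than a proof.

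For the converse, a finite search can only nominate the candidate pair; it cannot establish the required inequality $A_n(3241)\le A_n(3421)$ for \emph{all} $n$, which is what disproving the biconditional demands. The paper closes this gap by induction on $n$ (Proposition \ref{gtfrcjhy}): both popularities are decomposed according to the position of the maximum entry $n$ relative to the occurrence, the leading terms are compared by the inductive hypothesis, two of the remaining terms are matched exactly using Theorem \ref{hgchcv} together with a direct identity, and one strictly positive term is left over on the $3421$ side. You would need an argument of this kind --- or the closed forms you hope for, actually derived and compared --- to complete the refutation.
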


In Theorem \ref{libkg} we prove one direction of Conjecture \ref{hytdc}, by showing that if the spine structure of $T(\tau)$ is less than or equal to the spine structure of $T(\mu)$ in refinement order, then $A_n(\tau) \le A_n(\mu)$. It should be mentioned that Theorem \ref{libkg} has previously been proven by Rudolph (unpublished). Furthermore, in Proposition \ref{asef} we show that if $\tau$ and $\mu$ have length less than $n$ and the spine structure of $T(\tau)$ is less than the pines structure of $T(\mu)$ in refinement order, then $A_n(\tau) < A_n(\mu)$. Then in Proposition \ref{gtfrcjhy} we disprove Conjecture \ref{hytdc} by showing that there exists permutations $\tau$ and $\mu$ such that for all $n$ we have $A_n(\tau) \le A_n(\mu)$, however $T(\tau)$ and $T(\mu)$ are incomparable in the refinement order.  \\

\end{section}

\subsection*{Acknowledgements}
I would like to thank my PhD supervisor Anthony Henderson for helpful conversations.

\begin{section}{A popularity increasing move}

In this section we prove Theorem \ref{tfersx}, which is used to prove Theorem \ref{libkg} in Section \ref{kjbkjhbk}. We first introduce some definintions which are used to prove Theorem \ref{tfersx}.\\

Suppose $\sigma_1\ldots\sigma_n \in S_n(132)$, and that $\sigma_{s_1}\ldots\sigma_{s_k}$ (where $s_1< \cdots <s_k$), gives rise to a permutation pattern $\tau$. Then we say that $\{\sigma_{s_1},\ldots,\sigma_{s_k}\}$ are the elements in this occurrence of $\tau$, and for all $i \in [k]$ we say that $\sigma_{s_i}$ occurs as $\tau_i$. \\

If $\sigma=\sigma_1\ldots\sigma_n$ is a $132$-avoiding permutation and if $\sigma_j=n$, then for all $i<j$ and for all $k>j$ we have $\sigma_i>\sigma_k$. In otherwords, all entries to the left of $n$ are greater than all entries to the right of $n$.\\

Suppose that $\sigma=\sigma_1\ldots\sigma_n \in S_n(132)$, and that there are a pair of indices $i,~j$ with $i<j$, such that: 

\begin{itemize}
\item $\sigma_i<\sigma_j$,
\item for all integers $\alpha$ such that $\alpha<i$ we have $\sigma_{\alpha}>\sigma_j$,
\item for all integers $\beta$ such that $i<\beta < j$ we have $\sigma_{\beta}<\sigma_i$.
\end{itemize} 

\noindent Then we may form a new permutation $\phi = \phi_1\ldots\phi_n$, where $\phi_1\ldots\phi_{i-1} = \sigma_1\ldots\sigma_{i-1}$, $\phi_i =\sigma_j$, $\phi_{i+1}\ldots\phi_{j} = \sigma_{i}\ldots\sigma_{j-1}$, and $\phi_{j+1}\ldots\phi_n = \sigma_{j+1}\ldots\sigma_n$. In other words, $\phi$ is obtained from $\sigma$ by removing $\sigma_j$ and inserting it before $\sigma_i$. This implies that all elements in $\{\sigma_{j+1}, \ldots, \sigma_n\}$ are either less than $\sigma_i$ or greater than $\sigma_j$, hence we must have $\sigma_j =\sigma_i +1$. We can therefore denote $\phi$ unambiguously by $\sigma^{\sigma_j}$.\\

\begin{example}
Let $p =865347129 \in S_{9}(132)$. Then $p^{7}$ is defined, and is equal to $876534129$. 
\end{example}

Note that if $\sigma \in S_n(132)$, and $\sigma^{\sigma_j}$ is defined for some $j$, then $\sigma$ can be broken into five parts (dependent on $j$) as follows: 

\begin{itemize}
\item The set of numbers $\{\sigma_1,\ldots,\sigma_{i-1}\}$, which we denote by $L$ for left. Every number in $L$ is greater than $\sigma_j$.  
\item $\sigma_i$.
\item The set of numbers $\{\sigma_{i+1},\ldots,\sigma_{j-1}\}$, which we denote by $M$ for middle (this set is empty if $\sigma_j = \sigma_{i+1}$). Every number in $M$ is less than $\sigma_i$. 
\item $\sigma_j$.
\item The set of numbers $\{\sigma_{j+1},\ldots,\sigma_n\}$ which we denote by $R$ for right. Each number in this set is either less than all numbers in the set $M \cup \sigma_i$, or it is greater than $\sigma_j$. 
\end{itemize}

\begin{example}
Let $\sigma \in S_{9}(132)$ be the permutation $865347129$, and let $\sigma_j = 7$. Then $L = \{8\}$, $M=\{5,3,4\}$, and $R =\{1,2,9\}$. 
\end{example}

\begin{prop}
Suppose $\sigma \in S_n(132)$, and that for some $j$, $\phi= \sigma^{\sigma_j}$ is defined. Then $\phi \in S_n(312)$.
\end{prop}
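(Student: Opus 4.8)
As printed the proposition asserts $\phi\in S_n(312)$, but that statement is false and cannot be what is intended. The example computed just above gives $p^{7}=876534129$, and this word contains the pattern $312$: the entries in positions $2,5,6$ are $7,3,4$, and $3<4<7$. Thus $\phi$ need not lie in $S_n(312)$. What is true, and what the construction is plainly aimed at (producing a new $132$-avoiding permutation from $\sigma$), is that $\phi\in S_n(132)$; indeed $876534129$ does avoid $132$. I therefore describe how to prove $\phi\in S_n(132)$.

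The plan is a proof by contradiction that leans on the observation that $\sigma$ and $\phi$ differ only by the position of the single entry $\sigma_j=\sigma_i+1$. The first step is to note that erasing $\sigma_j$ from each word leaves the remaining entries in the same relative order: in $\sigma$ they read $L,\sigma_i,M,R$ and in $\phi$ they read $L,\sigma_i,M,R$ as well (since $\sigma=L\,\sigma_i\,M\,\sigma_j\,R$ and $\phi=L\,\sigma_j\,\sigma_i\,M\,R$). Hence any occurrence of $132$ in $\phi$ whose three entries avoid $\phi_i=\sigma_j$ is, on exactly the same entries, an occurrence of $132$ in $\sigma$, contradicting $\sigma\in S_n(132)$. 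So it is enough to forbid occurrences of $132$ in $\phi$ that use the entry $\phi_i=\sigma_j$.

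The second step treats, in a short case analysis, the role of $\phi_i=\sigma_j$ in a putative occurrence $\phi_a<\phi_c<\phi_b$ with $a<b<c$ of $132$. If $\sigma_j$ is the largest entry (so $b=i$) or the medium entry (so $c=i$), then some earlier entry $\phi_a$ exists with $a<i$, hence $\phi_a\in L$; but every element of $L$ exceeds $\sigma_j$, which contradicts the required $\phi_a<\phi_b=\sigma_j$ in the first subcase and $\phi_a<\phi_c=\sigma_j$ in the second. The remaining possibility is that $\sigma_j$ is the smallest entry, so $a=i$ and $\phi_b,\phi_c$ sit to the right of position $i$ with $\phi_b,\phi_c>\sigma_j$. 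Because positions $i+1,\dots,j$ carry $\sigma_i$ and the entries of $M$, all of which are at most $\sigma_i<\sigma_j$, both $\phi_b$ and $\phi_c$ must come from $R$, i.e. from positions exceeding $j$. Reading these two entries back in $\sigma$ together with $\sigma_i$ (which lies at position $i$, before both, and satisfies $\sigma_i<\sigma_j<\phi_c<\phi_b$) exhibits $(\sigma_i,\phi_b,\phi_c)$ as an occurrence of $132$ in $\sigma$, the desired contradiction.

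The only step needing real care is this last case, where one must verify that the two large entries genuinely lie beyond position $j$ and that reinstating $\sigma_i$ in their place yields a bona fide $132$ in $\sigma$; the inputs used are the block inequalities ($L$ above $\sigma_j$, and $M$ together with $\sigma_i$ below $\sigma_j$) recorded before the proposition. The rest is routine positional bookkeeping, and I anticipate no deeper obstacle.
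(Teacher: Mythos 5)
Your proof is correct and takes essentially the same approach as the paper's own: the paper's proof also begins ``Suppose for a contradiction that $\phi \not\in S_n(132)$'' and runs the same case analysis on whether $\sigma_j$ plays the role of the $1$, the $3$, or the $2$, which confirms your diagnosis that the stated conclusion $\phi\in S_n(312)$ is a typo for $\phi\in S_n(132)$. The only cosmetic difference is in the case where $\sigma_j$ occurs as the $1$: the paper reuses the very same triple $(\sigma_j,\phi_b,\phi_c)$ as a $132$ in $\sigma$ (valid since $\sigma_j$ at position $j$ still precedes the two entries of $R$), whereas you substitute $\sigma_i$ for $\sigma_j$; both are fine.
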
 

\begin{proof}
Suppose for a contradiction that $\phi \not \in S_n(132)$. Then $\sigma_j$ must occur as an element in a $132$ pattern in $\phi$. If $\sigma_j$ occurs as 1, then the elements that occur as 3 and 2 must be contained in $R$. This implies the contradiction that $\sigma$ would not be $132$-avoiding, since the same triple of integers form a 132 pattern in $\sigma$. If $\sigma_j$ occurs as 3 or 2 we also have a contradiction since $\sigma_1\ldots\sigma_{i-1}$ are all greater than $\sigma_j$. 
\end{proof}

\begin{prop}
For any $n \ge 2$ and any $\sigma \in S_n(132)-\{n(n-1)\ldots1\}$, there exists an index $j$ such that $\sigma^{\sigma_j}$ is defined. 
\end{prop}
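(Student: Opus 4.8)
The plan is to show that any $132$-avoiding permutation other than the decreasing one must contain a "descent inversion" of the special shape required to apply the move $\sigma^{\sigma_j}$. The key structural fact is the one the excerpt already records: if $\sigma_p = n$ is the position of the largest entry, then everything to the left of position $p$ exceeds everything to the right. This lets me proceed by induction on $n$ together with an analysis of where $n$ sits.

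First I would locate the largest entry, say $\sigma_p = n$. If $p = 1$, then $\sigma_2\ldots\sigma_n$ is a $132$-avoiding permutation of $\{1,\ldots,n-1\}$ (after relabelling), and since $\sigma \neq n(n-1)\ldots 1$ this suffix is not itself decreasing, so by the induction hypothesis it contains an index at which the move is defined; because the prefix consists only of the single largest element $n$, which lies above everything, the three defining conditions (in particular the condition that all entries to the left of position $i$ exceed $\sigma_j$) are inherited unchanged, and the same $j$ works for $\sigma$. If instead $p > 1$, then positions $1,\ldots,p-1$ all exceed positions $p+1,\ldots,n$, and in particular $\sigma_{p-1} < n = \sigma_p$ gives an ascent. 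The cleanest candidate is then to take $j = p$ and $i$ to be the position immediately giving the required window; the real work is checking that one can always choose $i$ so that the three bullet conditions hold.

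To make the $p>1$ case precise I would argue as follows. Consider the ascent $\sigma_{p-1} < \sigma_p = n$. Let $i$ be the smallest index such that $\sigma_i, \sigma_{i+1}, \ldots, \sigma_{p-1}$ are all less than $\sigma_p$ and moreover form the block of entries lying strictly below the entries further to the left; concretely, choose $i$ minimal with the property that every $\sigma_\alpha$ for $\alpha < i$ exceeds $\sigma_{p-1}$, equivalently exceeds all of $\sigma_i,\ldots,\sigma_{p-1}$. With $j = p$ the first bullet $\sigma_i < \sigma_j$ holds since $\sigma_j = n$ is maximal; the second bullet holds by the minimality choice of $i$; and the third bullet, that every $\sigma_\beta$ with $i < \beta < j$ satisfies $\sigma_\beta < \sigma_i$, is where the $132$-avoidance is used: if some intermediate entry exceeded $\sigma_i$ while lying below $n$, the triple $\sigma_i, \sigma_\beta, n$ (or an appropriate refinement) would realise a $132$ pattern, contradicting $\sigma \in S_n(132)$. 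I expect the careful bookkeeping in this last step to be the main obstacle, since one must rule out $132$ patterns both within the middle block and across it, and verify that the chosen $i$ is well defined (i.e. the set of candidate indices is nonempty, which is guaranteed because $\sigma_{p-1}$ itself qualifies).

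Finally I would confirm the base case $n = 2$: the only two permutations are $12$ and $21$; the latter is the excluded decreasing permutation, and for $12$ we take $i = 1$, $j = 2$, for which all three conditions hold vacuously or immediately. This completes the induction. An alternative to the induction, which I would keep in reserve in case the inductive inheritance of the conditions proves awkward, is a direct global argument: since $\sigma$ is not decreasing it has at least one ascent $\sigma_{i} < \sigma_{i+1}$, and among all ascents one can select the one maximising $\sigma_{i+1}$ subject to the left-block condition, then read off a valid $(i,j)$ pair; but I anticipate the inductive route to be cleaner because the structural fact about the position of $n$ reduces the problem so directly.
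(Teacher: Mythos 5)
Your base case and your $p=1$ case (where the maximum sits in the first position and you recurse on the suffix) are fine, but the $p>1$ case --- which is where almost all permutations land --- does not work, and the gap is not just bookkeeping. If you take $j=p$ to be the position of $n$, then the second defining condition (every $\sigma_\alpha$ with $\alpha<i$ must exceed $\sigma_j=n$) forces $i=1$, since nothing exceeds $n$. The third condition then demands $\sigma_1>\sigma_\beta$ for all $1<\beta<p$, i.e.\ that $\sigma_1$ be the largest entry of the prefix $\sigma_1\ldots\sigma_{p-1}$, and this simply need not hold: for $\sigma=2314\in S_4(132)$ we have $p=4$, $i$ is forced to be $1$, and $\sigma_2=3>\sigma_1=2$, so no valid pair $(i,j)$ with $j=p$ exists at all. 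Your proposed rescue via $132$-avoidance is spurious: an intermediate entry $\sigma_\beta$ with $\sigma_i<\sigma_\beta<n$ at a position between $i$ and $p$ gives the triple $(\sigma_i,\sigma_\beta,n)$ in increasing positional order with increasing values, which is a $123$ pattern, not a $132$ pattern, so there is no contradiction to extract. The same objection applies to your fallback of choosing the ascent maximising $\sigma_{i+1}$, since that again selects the ascent into $n$.

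The fix is to aim at the \emph{first} ascent rather than the highest one, which is what the paper does: if $\sigma_1<\sigma_2$ take $i=1$, $j=2$; otherwise let $j$ be minimal with $\sigma_{j-1}<\sigma_j$, so that $\sigma_1>\sigma_2>\cdots>\sigma_{j-1}$ is strictly decreasing, and let $i$ be the least index with $\sigma_i<\sigma_j$. Then the second condition holds by minimality of $i$, and the third holds because the prefix up to position $j-1$ is decreasing --- no appeal to $132$-avoidance is needed. (Your inductive framing could also be repaired by recursing on the prefix $\sigma_1\ldots\sigma_{p-1}$ when it is not decreasing, but that ends up reproducing the first-ascent argument in disguise.)
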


\begin{proof}
Suppose that $\sigma_1<\sigma_2$ ($\sigma$ begins with an ascent). Then $\sigma^{\sigma_2}$ is defined. Suppose that $\sigma$ begins with a descent. Then there exists an index $j$ such that $\sigma_1>\sigma_2> \cdots >\sigma_{j-1}$, and $\sigma_{j-1}<\sigma_j$. There must also exist a least integer $i$ such that $\sigma_i<\sigma_j$, and this implies that $\sigma^{\sigma_j}$ is defined. 
\end{proof}

Suppose $\tau \in S_k(132)$, and $n$ is an integer greater than or equal to $k$. Let $\mathcal{O}_n(\tau)$ denote the set of occurrences of $\tau$ in permutations $\sigma \in S_n(132)$, such that $n$ is an element in the occurrence, and let $\mathcal{O}_{n'}(\tau)$ be the set of occurrences of the pattern $\tau$ in permutations $\sigma \in S_n(132)$ such that $n$ is not an element in the occurrence. Then $$A_n(\tau) =|\mathcal{O}_n(\tau)| + |\mathcal{O}_{n'}(\tau)|.$$ We denote an occurrence of $\tau$ in some $\sigma = \sigma_1\ldots\sigma_n$ by a pair $(\sigma,\sigma_{s_1}\ldots\sigma_{s_k})$ in which $\{\sigma_{s_1},\ldots,\sigma_{s_k}\}$ are the elements in the occurrence of $\tau$. \\

Given permutations $\tau$ of length $k$ and $\mu$ of length $l$, we define permutations $\tau \oplus \mu$ and $\tau \ominus \mu$ each of length $k+l$ as follows

$$(\tau \oplus \mu)_i = \begin{cases}
\tau_i, & \text{if $1 \le i \le k$},\\
\phi_i +k, & \text{if $k+1 \le i \le k+l$}, 
\end{cases}$$

$$(\tau \ominus \mu)_i = \begin{cases}
\tau_i +l, & \text{if $1 \le i \le k$},\\
\phi_i, & \text{if $k+1 \le i \le k+l$}.
\end{cases}$$

\begin{thm}
Suppose that $\mu, \tau \in S_k(132)$ and that for some $j$, $\mu = \tau^{\tau_j}$. Then for all $n \ge k$, we have $A_n(\mu) \ge A_n(\tau)$.\label{tfersx}
\end{thm}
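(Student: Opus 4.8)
The plan is to induct on the common length $k$ of $\tau$ and $\mu$, using as the engine a recursion for $A_n$ obtained by decomposing both the host permutation and the pattern at their largest entries. Write a generic $\sigma \in S_n(132)$ as $\sigma = \gamma\, n\, \delta$ with $\gamma \in S_{p-1}(132)$ and $\delta \in S_{n-p}(132)$, recalling that every entry of $\gamma$ exceeds every entry of $\delta$, and write the max-decomposition $\tau = \alpha\, k\, \beta$, where $k$ is the largest entry of $\tau$ and every entry of $\alpha$ exceeds every entry of $\beta$. An occurrence of $\tau$ in $\sigma$ that uses the entry $n$ must use it in the role of $k$, and then the entries playing $\alpha$ lie in $\gamma$ while those playing $\beta$ lie in $\delta$; summing over all $\sigma$ turns $|\mathcal{O}_n(\tau)|$ into the convolution $\sum_{p} A_{p-1}(\alpha)\,A_{n-p}(\beta)$, with the convention that an empty factor contributes the count $|S_\bullet(132)|$. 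Dually, an occurrence not using $n$ splits into a part inside $\gamma$ and a part inside $\delta$ whose values are forced above the latter, so it corresponds to a skew decomposition $\tau = \tau' \ominus \tau''$; summing gives a second family of convolutions of the $A_\bullet$ attached to the skew factors. Altogether $A_n(\tau)$ is a coefficientwise nonnegative combination of convolutions of popularity functions of patterns derived from the pieces of $\tau$.

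With this recursion in hand, I would split on the position of the maximum $k$ relative to the moved pair, using the five-block form $\tau = L\, \tau_i\, M\, \tau_j\, R$. If $k$ lies in $L$, the entire move takes place strictly to the right of the maximum: $\alpha$ is unchanged and $\beta$ is replaced by $\beta$ with the analogous move performed on it, a strictly shorter instance of the same operation. By the inductive hypothesis the popularity function of this modified $\beta$ dominates that of $\beta$ for every length, and since the recursion is monotone in each convolution factor, $A_n(\mu) \ge A_n(\tau)$ follows. The case $k \in R$, where instead $\alpha$ changes by an induced move and $\beta$ is unchanged, is symmetric. The remaining bookkeeping here is to check that the skew decompositions of $\tau$ and of $\mu$ indexing the second family match up (or dominate) term by term; this is routine but must be verified.

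The essential case, which I expect to be the main obstacle, is when the moved entry $\tau_j$ is itself the maximum. Then $L = \emptyset$, $\tau_j = k = \tau_i + 1$, every entry of $R$ lies below $\tau_i = k-1$, and the move reads $\tau = (k-1)\, M\, k\, R \mapsto \mu = k\,(k-1)\, M\, R$: the maximum is dragged from an interior position to the very front, so that $\mu = 1 \ominus \nu$ is a skew sum with a single leading point while in $\tau$ the maximum sits after $(k-1)\,M$. The two recursions no longer align term by term, and the corresponding convolution inequality must be proved directly. I would attack it by constructing, host by host, an explicit injection from occurrences of $\tau$ to occurrences of $\mu$: an occurrence of $\tau$ is re-read as an occurrence of $\mu$ after sliding the entry playing the maximum to just before the entry playing $k-1$, which amounts to a local rearrangement of the host $\sigma$ of the same flavor as the operation $\sigma \mapsto \sigma^{\sigma_j}$. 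The delicate points are that this rearrangement is always defined and keeps us inside $S_n(132)$, that the assignment is injective after summation over all hosts, and that the extra occurrences of $\mu$ created when the maximum passes the block $M$ are accounted for (these explain why the inequality is generally strict). Once this base case is established it also settles $k = 2$, where $\tau = 12$ and $\mu = 21$, and the induction closes.
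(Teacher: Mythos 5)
Your overall architecture is the same as the paper's: a convolution recursion obtained by splitting the host at its maximum $n$, induction to dispose of every case except the one where the moved entry $\tau_j$ is the maximum of $\tau$, and an explicit injection on occurrences for that remaining case. You have correctly located the crux. Two issues, one minor and one genuine.

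Minor: the induction cannot be on $k$ alone. In the family of occurrences not using $n$, the terms where the entire occurrence sits on one side of $n$ contribute factors $A_{n'}(\tau)$ with the \emph{full} pattern $\tau$ and $n' < n$, so you need the statement for the same $k$ at smaller $n$ as well; the paper runs the induction on the pair $(k,n)$.

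Genuine gap: in the essential case your proposed rearrangement --- ``sliding the entry playing the maximum to just before the entry playing $k-1$'' --- does not stay inside $S_n(132)$, so the injection as described is not defined. Concretely, take $\tau = 12$, $\mu = \tau^{2} = 21$, host $\sigma = 123$, and the occurrence $\sigma_2\sigma_3 = 23$ of $\tau$ in which $3$ plays the maximum; the entry playing $k-1$ is $\sigma_2 = 2$, and sliding $3$ to just before it yields $132$, which is not $132$-avoiding. The problem is that entries of the host lying to the left of the occurrence of $\tau_i$ but below $n$'s new right-hand neighbours create $132$ patterns. The paper's construction avoids this by operating on blocks rather than on the single entry $n$: it chooses the \emph{earliest} index $\alpha$ such that $u = \sigma_1\ldots\sigma_\alpha$ contains the occurrence of $M\cup\{\tau_i\}$ and dominates everything after position $\alpha$ other than $n$, writes $\sigma = u\,v\,n\,w$ with $v$ the block strictly between $u$ and $n$, and maps the host to $\bigl((v\oplus 1)\ominus u\bigr)\ominus w$, i.e.\ $v$ is carried to the front and rescaled above everything, $n$ is placed immediately after it, and $u$ follows. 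This lands in $S_n(132)$, and the minimality of $\alpha$ is what lets one recover $(u,v,w)$, hence the preimage, from the image --- which is the injectivity you flagged but did not supply. Without some such canonical block decomposition, neither well-definedness nor injectivity goes through, so this step is not merely ``delicate'' bookkeeping but the missing core of the argument.
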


\begin{proof}
We proceed by induction. Assume that the proposition holds for any pair $(k',n')$, with $k' \le n'$, such that either $k' < k$ and $n' \le n$, or $n' < n$, and $k' \le k$. Note that this implies that $A_{n'}(\mu) \ge A_{n'}(\tau)$ for any $n'<n$.
We will consider occurrences of the pattern $\tau$ inside permutations $\sigma \in S_n(132)$. We will consider the following two cases separately.  

\begin{itemize}
\item[a)] The set of occurrences of $\tau$ contained in $\mathcal{O}_{n'}(\tau)$.
\item[b)] The set of occurrence of $\tau$ contained in $\mathcal{O}_n(\tau)$. In this case $n$ may occur as $\tau_j$ if and only if $L$ is empty, and the elements in $R$ are all less than the elements in $M \cup \{\tau_i\}$.
\end{itemize}

We will first show directly that $|\mathcal{O}_{n'}(\tau)| \le |\mathcal{O}_{n'}(\mu)|$. We will then show that $|\mathcal{O}_n(\tau)| \le |\mathcal{O}_{n}(\mu)|$. We do this by directly showing that there are more occurence of $\mu$ than of $\tau$, in the case in which an element in $L \cup R$ occurs as $n$. In the case that $\sigma_j$ occurs as $n$ we defining a map from such occurrences of $\tau$, to such occurrences of $\mu$, and show the map is an injection.\\
 
First we consider case a).  Consider an occurrence of $\tau$ in $\mathcal{O}_{n'}(\tau)$, that occurrs in $\sigma \in S_n(132)$. Writing $\sigma$ is one-line notation, $n$ must lie to the left of an entry that occurs as an element in $L \cup \{\tau_i\}$, or to the right of an entry that occurs as an element in $R \cup\{\tau_j\}$. Consider occurrences of $\tau$, in which $n$ lies to the left of (an entry that occurs as) an element $\tau_{\xi} \in L \cup \{\tau_i\}$, and to the right of $\tau_{\xi-1}$ if $i \ne 1$. The number of such occurrences of $\tau$ in all possible $\sigma \in S_n(132)$ is the sum $$\sum_{\alpha =\xi -1 }^{n+\xi-k-2}A_{\alpha}(\tau_1\ldots\tau_{\xi-1})A_{n-1 - \alpha}(\tau_{\xi}\ldots\tau_k).$$ This is true since the pattern $\tau_1\ldots\tau_{\xi-1}$ occurs within the $132$-avoiding permutation $\sigma_1\sigma_2\ldots\sigma_{\alpha}$ when $\sigma_{\alpha+1} =n$, and $\tau_{\xi}\ldots\tau_{k}$ occurs in the $132$-avoiding permutation $\sigma_{\alpha+2}\ldots\sigma_n$.  In the case that $n$ is to the left of $\tau$ (i.e. $\xi=1$ and $\tau_1,\ldots,\tau_{\xi-1}$ is the empty word), recall that we let the term $A_{\alpha}(\tau_1\ldots\tau_{\xi-1})$ equal $|S_{\alpha}(132)|$, and $|S_{0}(132)| =1$. In this case, $n$ may lie between $\mu_{\xi}$ and $\mu_{\xi-1}$ (to the left of $\mu$ if $\xi=1$) in occurrences of $\mu$ in $\mathcal{O}_{n'}(\mu)$. The number of such occurrences of $\mu$ in all possible $\sigma \in S_n(132)$ is the sum\\
$$\sum_{\alpha =\xi -1 }^{n+\xi-k-2}A_{\alpha}(\mu_1\ldots\mu_{\xi-1})A_{n-1 - \alpha}(\mu_{\xi}\ldots\mu_k).$$ Now, $\mu_1\ldots\mu_{\xi-1} = \tau_1\ldots\tau_{\xi-1}$, so that $A_{\alpha}(\mu_1\ldots\mu_{\xi-1}) = A_\alpha(\tau_1\ldots\tau_{\xi-1})$ for all $\alpha$ in the summation. By induction, we have that $$A_{n-1 - \alpha}(\tau_{\xi}\ldots\tau_k) \le A_{n-1 - \alpha}(\mu_{\xi }\ldots\mu_k)$$ for all $\alpha$ in the summation, since $\mu_{\xi }\ldots\mu_k =\tau_{\xi }\ldots\tau_k^{\tau_i}$. Therefore

\begin{align*}&\sum_{\alpha =\xi -1 }^{n+\xi-k-2}A_{\alpha}(\tau_1\ldots\tau_{\xi-1})A_{n-1 - \alpha}(\tau_{\xi}\ldots\tau_k) \\
\le &\sum_{\alpha =\xi -1 }^{n+\xi-k-2}A_{\alpha}(\mu_1\ldots\mu_{\xi-1})A_{n-1 - \alpha}(\mu_{\xi}\ldots\mu_k)\end{align*} holds for all $\xi$ such that $n$ may lie between $\tau_{\xi-1}$ and $\tau_\xi$, where $\tau_{\xi} \in L \cup\{\tau_i\}$.  \\

Suppose that it is possible that $n$ lies between elements $\tau_{\xi}$ and $\tau_{\xi +1}$ where $\tau_{\xi} \in R \cup \{\tau_j\}$ in occurrences of $\tau$ in $\mathcal{O}_{n'}(\tau)$. Then $n$ may lie between $\mu_{\xi}$ and $\mu_{\xi+1}$ in occurrences of $\mu$ in $\mathcal{O}_{n'}(\mu)$ ($n$ lies to the right of $\tau$ if $\xi=k$). Note that if $\xi=j$, then $\mu_{\xi} =\tau_{j-1}$. By a similar argument to that given when $\tau_{\xi} \in L \cup \{\tau_i\}$, the number of such occurrences of the pattern $\tau$ is less than or equal to the number of such occurrences of the pattern $\mu$. Since we have considered all such occurrences of $\tau$ in $\mathcal{O}_{n'}(\tau)$, we have shown that $$|\mathcal{O}_{n'}(\tau)| \le |\mathcal{O}_{n'}(\mu)|.$$

We now consider case b). Suppose that $n$ may occur as an element in $L$ or $R$, in an occurrence of $\tau$ in $\mathcal{O}_n(\tau)$. If $n$ may occur as an element $\tau_{\xi} \in L$, then $n$ may occur as $\mu_{\xi}$ in an occurrence of $\mu$ in $\mathcal{O}_n(\mu)$. The number of such occurrence  of the pattern $\tau$ in all possible $\sigma \in S_n(132)$ is 
$$\sum_{\alpha =\xi -1 }^{n+\xi-k-1}A_{\alpha}(\tau_1\ldots\tau_{\xi-1})A_{n -1- \alpha}(\tau_{\xi+1 }\ldots\tau_k),$$ and the number of such occurrences of the pattern $\mu$ in all possible $\sigma \in S_n(132)$ is 
$$\sum_{\alpha =\xi -1 }^{n+\xi-k-1}A_{\alpha}(\mu_1\ldots\mu_{\xi-1})A_{n - \alpha-1}(\mu_{\xi+1 }\ldots\mu_k).$$ Since $\tau_1\ldots\tau_{\xi-1} =\mu_1\ldots\mu_{\xi-1}$ we have that $A_{\alpha}(\tau_1\ldots\tau_{\xi-1}) = A_{\alpha}(\mu_1\ldots\mu_{\xi-1})$ for all $\alpha$ in the summation. By the inductive hypothesis $A_{n -1- \alpha}(\tau_{\xi+1 }\ldots\tau_k) \le A_{n -1- \alpha}(\mu_{\xi+1 }\ldots\mu_k)$ for all $\alpha$ in the summation since $\mu_{\xi+1 }\ldots\mu_k =\tau_{\xi+1 }\ldots\tau_k^{\tau_i}$. Hence
\begin{align*}&\sum_{\alpha =\xi -1 }^{n+\xi-k-1}A_{\alpha}(\tau_1\ldots\tau_{\xi-1})A_{n -1- \alpha}(\tau_{\xi+1 }\ldots\tau_k)\\
 \le &\sum_{\alpha =\xi -1 }^{n+\xi-k-1}A_{\alpha}(\mu_1\ldots\mu_{\xi-1})A_{n - \alpha-1}(\mu_{\xi+1 }\ldots\mu_k).\end{align*} If $n$ may occur as an element $\tau_{\xi} \in R$, then $n$ may occur as $\mu_{\xi}$ in an occurrence of $\mu$. By the same argument as in the case where $\tau_{\xi} \in L$, we have that the number of such occurrences pattern $\tau$ is less than or equal to the number of such occurrences of the pattern $\mu$.\\

We now consider elements of $\mathcal{O}_n(\tau)$ in which $n$ occurs as $\tau_j$. Note that this case is possible if and only if $\tau_i = \tau_1$ ($L$ is empty), and all elements of $R$ are less than elements in $M \cup \tau_i$. For such $\tau$ we will define a map $$\psi:\mathcal{O}_n(\tau) \rightarrow \mathcal{O}_n(\mu).$$ We will argue that this map in injective, and therefore, the number of such occurrences of the pattern $\tau$ are less than or equal to the number of such occurrences of the pattern $\mu$. \\

Fix an element $(\sigma,\sigma_{s_1}\ldots\sigma_{s_k} )\in \mathcal{O}_n(\tau)$ such that $\sigma_m = n$. Consider the earliest index $\alpha \in [m-1]$ such that: 

\begin{itemize}
\item the occurrence of the elements in $M \cup \{\tau_i\}$ in $\sigma$ are in the set $\{\sigma_1,\ldots,\sigma_{\alpha}\}$, and
\item each element in $\{\sigma_1,\ldots,\sigma_{\alpha}\}$ is greater than all the elements in $\{\sigma_{\alpha+1},\ldots,\sigma_n\} - \{\sigma_m\}$.
\end{itemize}

Note that there must exist such an index $\alpha$ since $m-1$ satisfies the above two conditions. Note also, that since $\sigma \in S_n(132)$, the elements $\sigma_{\alpha+1},\ldots,\sigma_{m-1}$ (we take this set to be empty if $\sigma_{\alpha} = \sigma_{m-1}$), are an interval in the set of integers $[n]$. Let $u$ be the permutation pattern $\sigma_1\ldots\sigma_{\alpha}$, let $v$ be the permutation pattern $\sigma_{\alpha+1}\ldots\sigma_{m-1}$, and let $w$ be the permutation pattern $\sigma_{m+1}\ldots\sigma_n$. We let 

$$\psi((\sigma,\sigma_{s_1}\ldots\sigma_{s_k})) = (\bar \sigma, \bar\sigma_{t_1}\ldots\bar \sigma_{t_k}),$$ where $$\bar \sigma = ((v \oplus 1) \ominus u)\ominus w$$ with the occurrence $\bar\sigma_{t_1},\ldots,\bar \sigma_{t_k}$ of $\mu$ in $\bar \sigma$ is given by $n$, followed by the entries in the patterns $u$ and $w$ in $\bar \sigma$ that give the occurrence of the elements $\tau_1,\ldots,\tau_{j-1},\tau_{j+1},\ldots,\tau_n$ in $u$ and $w$ in $\sigma$ (see Example \ref{uyfuf}).\\

Now $\bar \sigma$ is clearly in $S_n(132)$. To show that $\psi$ in injective, we show that for any occurrence of $\mu$ in the image of this map, we can recover the occurrence of $\tau$ that mapped to it uniquely. Suppose $\psi((\sigma,\sigma_{s_1}\ldots\sigma_{s_k})) = (\bar \sigma, \bar\sigma_{t_1}\ldots\bar \sigma_{t_k})$, and consider the earliest index $\beta \in [n]$ such that: 

\begin{itemize}
\item the occurrence of the elements in $M \cup \{\tau_i\}$ in $\bar \sigma$ are in the set $\{\bar \sigma_1,\ldots,\bar \sigma_{\beta}\}$, 
\item the occurrence of the elements in $R$ in $\bar \sigma$ are in the set $\{\bar \sigma_{\beta+1},\bar \sigma_{\beta+2},\ldots,\bar \sigma_{n}\}$,
\item each element in $\{\bar \sigma_1,\ldots,\bar \sigma_{\beta}\}$ is greater than all the elements in $\{\bar \sigma_{\beta+1},\ldots,\bar \sigma_n\}$.
\end{itemize}

Note that there must exist such an index $\beta$ since there was an index $\alpha$ that satisfied these conditions in $\sigma$. Suppose that $\bar \sigma_{l} = n$. Then the permutation pattern $\bar \sigma_1,\ldots,\bar \sigma_{l-1}$ is denoted by $v$. We denote the permutation pattern $\bar \sigma_{l+1},\ldots,\bar \sigma_\beta$ by $u$, and we denote the permutation pattern $\bar \sigma_{\beta +1}\ldots\bar \sigma_{n}$ by $w$. Then 
$$\bar \sigma = ((v \oplus 1) \ominus u) \ominus w,$$ and the occurrence of $\tau$ that maps to $(\bar \sigma, \bar\sigma_{t_1}\ldots\bar \sigma_{t_k})$ must be given by 
$$\sigma = ((u \ominus v ) \oplus 1) \ominus w,$$ where $n$ occurs as $\tau_j$, and the entries in $u$ and $w$ that corresponded to the occurrence of $\mu_2\ldots\mu_n$ in $\bar \sigma$ give the occurrence of $\tau_1\ldots\tau_{j-1}\tau_{j+1}\ldots\tau_n$ in $\sigma$ (see Example \ref{uyfuf}). This is exactly $(\sigma,\sigma_{s_1}\ldots\sigma_{s_k})$. 

\end{proof}

%
%
%

\begin{example}We give an example of the map $\psi$ given in the proof of Theorem \ref{tfersx}.
Let $k=9$, and let $n = 16$. Let $\tau$ be the permutation $534612$, and let $\mu= \tau^{6} = 653412$. Let $\sigma$ be the permutation 
$$(11,12,8,7,9,6,10,5,4,13,2,1,3),$$ (a bracket has been included since the numbers are too high for the standard one-line notation) and let $$((11,12,8,7,9,6,10,5,4,13,2,1,3),(12,7,9,13,2,3))$$ be an occurrence of $\tau$ in $\sigma$. The index $m$ is equal to $10$, and $\sigma_{10} = 13$ occurs as $\tau_j =6$. The pattern $u$ is given by $(11,12,8,7,9,6,10)$, the pattern $v$ is given by $(5,4)$, and the pattern $w$ is given by $(2,1,3)$. Then the occurrece of $\mu$ in the permutation $\bar \sigma$ is given by 
$$((12,11,13,9,10,6,5,7,4,8,2,1,3),(13,10,5,7,2,3)).$$ 
\label{uyfuf}
\end{example}

\end{section}

\begin{section}{Proof of the main theorem}
\label{kjbkjhbk}

In this section, we prove the main theorem, Theorem \ref{libkg}. Before proving Theorem \ref{libkg}, we describe the binary tree $T(\sigma)$ associated to a $132$-avoiding permutation $\sigma$, and we then descibe the difference in the structure of $T(\tau)$ and $T(\mu)$ where $\mu=\tau^{\tau_j}$ for some $j$. \\

The set of all $132$-avoiding permutations of length $n$ are in bijection with the set of all binary trees with $n$ vertices, denoted $\mathcal{T}_n$, as described in \cite{ru}. We will briefly describe this bijection. For any tree $T \in \mathcal{T}_n$, we label the vertices as follows. We visit each of the vertices of the tree in pre-order (root, left subtree, right subtree), labelling the $i$th vertex visitied by $n+1-i$. Given $T$ with such a labelling, we recover its corresponding permutation $\sigma =\sigma_1\ldots\sigma_n \in S_n(132)$, by visiting the vertices in in-order (left subtree, root, right subtree), letting the $i$th vertex visited be the value of $\sigma_{i}$. Following the notation of \cite{ru}, for any $\sigma \in S_n(132)$, we denote its associated binary tree by $T(\sigma)$. Rudolph defines the \emph{spines} of a binary tree $T$ to be connected components of $T$ when all edges connecting left children to their parents are deleted. She also defines the \emph{spine structure} of $T$ to be the sequence $\langle a_1,\ldots,a_s\rangle$ where the $a_i$ are the number of vertices in each the spines in $T$, listed in descending order. We also denote the spine structure of a tree $T$ by $S(T)$, and call the numbers $a_i$ the \emph{parts} of $S(T)$. Define the relation $\le_{R}$ called the \emph{refinement order}, on the set of all spine structures whose parts sum to $n$ as follows. If $\langle a_1,\ldots,a_s\rangle$ and $\langle b_1,\ldots,b_t\rangle$ are two spine structures such that $\sum_{i=1}^s a_i = \sum_{i=1}^tb_i=n$, then $\langle a_1,\ldots,a_s\rangle \ge_{R} \langle b_1,\ldots,b_t\rangle$ (or equivalently $\langle b_1,\ldots,b_t\rangle \le_{R} \langle a_1,\ldots,a_s\rangle$) if $\langle a_1,\ldots,a_s\rangle$ can be obtained from $\langle b_1,\ldots,b_t\rangle$ by merging subsets of its parts. For example $\langle 5,4,3\rangle \ge_{R} \langle 3,3,2,2,1,1\rangle$ since it can be obtained by merging the subset of parts \{3,2\} and the subset of parts $\{2,2,1\}$.\\

Suppose $G$ is a graph with vertex set $S$. An \emph{induced subgraph} $H$ of $G$ is a graph with vertices on a subset of $S$, such that two vertices are adjacent in $H$ if and only if they are adjacent in $G$. A \emph{subtree} $H$ of a binary tree graph $G$, is an induced subgraph that is itself a binary tree graph, in which the root of $H$ is the closest vertex of $H$ to the root of $G$.\\

The following two propositions are used to understand the proof of Proposition \ref{ghtdyhdh}.

\begin{prop}
Suppose $\sigma = \sigma_1\ldots\sigma_n \in S_n(132)$. For any index $\alpha \in [n]$ such that $\sigma_{\alpha} \in [n-1]$, let $\sigma_{\beta}$ be the least integer such that:

\begin{itemize}
\item $\sigma_{\beta} > \sigma_{\alpha}$,
\item if $\beta< \alpha$, then $\sigma_{\beta+1},\ldots,\sigma_{\alpha-1}$ (this set is empty if $\beta =\alpha-1$) are all less than $\sigma_{\alpha}$,
\item if $\beta > \alpha$, then $\sigma_{\alpha+1},\ldots,\sigma_{\beta-1}$ (this set is empty if $\beta = \alpha+1$) are all less than $\sigma_{\alpha}$. 
\end{itemize}

Then $\sigma_{\beta}$ is the parent vertex of $\sigma_{\alpha}$ in $T(\sigma)$. \label{kiuvgg}

\end{prop}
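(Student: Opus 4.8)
The plan is to establish the bijection between $132$-avoiding permutations and binary trees concretely enough that the parent relation can be read off directly from the one-line notation. First I would unpack the labelling: the pre-order labelling assigns the largest value $n$ to the root, and the in-order readout produces the permutation, so the key structural fact is that in any $132$-avoiding permutation the largest entry $n$ splits $\sigma$ into a left block (all entries greater than those in the right block) corresponding to the left subtree and a right block corresponding to the right subtree. This recursive decomposition is the engine of the whole argument, and I would set it up as the induction scaffold.

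Second, I would argue by induction on $n$ using this split. Fix $\alpha$ with $\sigma_\alpha \in [n-1]$, so $\sigma_\alpha$ is not the root. The position of $\sigma_\alpha$ relative to the maximal entry $n$ places it in either the left subtree or the right subtree of $T(\sigma)$; in either case its parent is computed inside that subtree, which is itself the tree of a shorter $132$-avoiding permutation, so the inductive hypothesis applies \emph{provided} the candidate $\sigma_\beta$ defined by the three bulleted conditions also lies in the same subtree. The heart of the verification is to check that the minimality and the "everything strictly between is smaller than $\sigma_\alpha$" conditions are exactly preserved under restriction to a subtree, and that when $\sigma_\alpha$ is a child of the root (i.e.\ $\sigma_\alpha = n-1$ in the relevant block, the maximum of its own block), the conditions single out $n$ itself as $\sigma_\beta$. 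I would handle the base case $n=1$ (no such $\alpha$) and then treat separately the subcase where $\sigma_\alpha$ is the maximum of its containing block, since then the least strictly larger entry witnessing the conditions is the block's maximum, i.e.\ the parent.

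Third, I would tie the abstract tree relation to the stated combinatorial conditions by interpreting "left child" versus "right child" in terms of position: a left child appears to the right of its parent in in-order but was visited later in pre-order, while a right child appears to the left in the relevant sense; the three conditions in the statement are precisely the in-order manifestation of "$\sigma_\beta$ is the nearest ancestor that dominates $\sigma_\alpha$ with nothing larger than $\sigma_\alpha$ intervening." Concretely, the condition that $\sigma_\beta$ is the \emph{least} integer exceeding $\sigma_\alpha$ with all entries strictly between $\alpha$ and $\beta$ smaller than $\sigma_\alpha$ says that $\sigma_\beta$ is the smallest value whose subtree contains $\sigma_\alpha$ as an immediate child, which is exactly the parent.

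The main obstacle I expect is the careful case analysis distinguishing $\beta < \alpha$ from $\beta > \alpha$, i.e.\ whether the parent lies to the left or the right of $\sigma_\alpha$ in one-line notation. This is where the $132$-avoidance is doing real work: it guarantees that the "interval" structure of the blocks is respected, so that the minimality of $\sigma_\beta$ cannot be spoiled by some intervening larger entry on the opposite side. I would want to verify explicitly that $132$-avoidance rules out the configuration that would make the claimed parent wrong, and to confirm that the two positional cases correspond cleanly to $\sigma_\alpha$ being a left child versus a right child. Everything else is a routine consequence of the recursive root-splitting once that case distinction is pinned down.
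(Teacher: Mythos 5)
Your proposal matches the paper's proof: both proceed by induction on $n$ using the decomposition of $\sigma$ at the position of the maximal entry $n$, with the left and right blocks giving the left and right subtrees, the children of the root (the block maxima) handled directly since $n$ is then the unique value satisfying the three conditions, and all other vertices handled by the inductive hypothesis inside their own block. (One small slip in your informal aside: in in-order traversal a left child appears to the \emph{left} of its parent, not the right, but this does not affect the argument.)
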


\begin{proof}
We proceed by induction. We assume that the proposition is true for all integers less than $n$. Suppose that $\sigma_m = n$. Since $\sigma \in S_n(132)$, the patterns $\sigma_1\ldots\sigma_{m-1}$ and $\sigma_{m+1}\ldots\sigma_n$ are both $132$-avoiding. Now $T(\sigma)$ has root $n$, and the left child of $n$ is the root of the subtree $T(\sigma_1\ldots\sigma_{m-1})$, and the right child of $n$ is the root of the subtree $T(\sigma_{m+1}\ldots\sigma_n)$. By induction, the proposition holds for $T(\sigma_1\ldots\sigma_{m-1})$ and $T(\sigma_{m+1}\ldots\sigma_n)$. Then since the root of $T(\sigma_1\ldots\sigma_{m-1})$ is the largest integer in the set $\{\sigma_1,\ldots,\sigma_{m-1}\}$, and the root of $T(\sigma_{m+1}\ldots\sigma_n)$ is the largest integer in the set $\{\sigma_{m+1},\ldots,\sigma_n\}$, the condition of the proposition holds for all vertices in $T(\sigma)$, so that the proposition holds for all $n$.  
\end{proof}

Proposition \ref{kiuvgg}, immediately implies the following: 

\begin{prop}
Suppose $\sigma = \sigma_1\ldots\sigma_n \in S_n(132)$. For any index $\alpha \in [n]$ such that $\sigma_{\alpha}>1$, let $\sigma_{\beta}$ be the greatest integer such that:

\begin{itemize}
\item $\sigma_{\beta} < \sigma_{\alpha}$,
\item if $\beta< \alpha$, then $\{\sigma_{\beta+1},\ldots,\sigma_{\alpha-1}\}$ (this set is empty if $\beta =\alpha-1$) are all less than $\sigma_{\beta}$,
\item if $\beta > \alpha$, then $\{\sigma_{\alpha+1},\ldots,\sigma_{\beta-1}\}$ (this set is empty if $\beta = \alpha+1$) are all less than $\sigma_{\beta}$. 
\end{itemize}

Then $\sigma_{\beta}$ is a child vertex of $\sigma_{\alpha}$ in $T(\sigma)$. 
\label{jiufygf}
\end{prop}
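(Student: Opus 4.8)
The plan is to derive Proposition \ref{jiufygf} directly from Proposition \ref{kiuvgg} by exploiting the parent--child symmetry, exactly as the sentence ``Proposition \ref{kiuvgg}, immediately implies the following'' suggests. The key observation is that the relation ``$\sigma_\beta$ is the parent of $\sigma_\alpha$'' is the inverse of ``$\sigma_\alpha$ is a child of $\sigma_\beta$''. So I would fix an index $\alpha$ with $\sigma_\alpha > 1$, let $\sigma_\beta$ be the integer satisfying the three bulleted conditions of the present proposition, and then verify that these conditions are precisely Proposition \ref{kiuvgg}'s conditions with the roles of $\alpha$ and $\beta$ interchanged. Once that identification is made, Proposition \ref{kiuvgg} tells us $\sigma_\alpha$ is the parent of $\sigma_\beta$, which is the same as saying $\sigma_\beta$ is a child of $\sigma_\alpha$, giving the conclusion.

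The heart of the argument is checking that the hypotheses match up. First I would observe that the conditions are symmetric in the following sense: in Proposition \ref{kiuvgg} one starts from a vertex $\sigma_\alpha$ and finds its parent as the \emph{least} integer larger than $\sigma_\alpha$ with no larger entry strictly between them; here one starts from $\sigma_\alpha$ and finds a child as the \emph{greatest} integer smaller than $\sigma_\alpha$ subject to the analogous betweenness condition. Writing $\gamma = \beta$ and $\delta = \alpha$, the present proposition's requirement that $\sigma_\gamma < \sigma_\delta$ and that every entry strictly between positions $\gamma$ and $\delta$ is less than $\sigma_\gamma$ is exactly the requirement of Proposition \ref{kiuvgg} that $\sigma_\delta$ is the least integer exceeding $\sigma_\gamma$ with all entries strictly between them less than $\sigma_\gamma$. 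I would confirm that ``greatest $\sigma_\beta$ below $\sigma_\alpha$'' on this side corresponds to ``least $\sigma_\alpha$ above $\sigma_\beta$'' on the other side, using that in a binary tree each vertex has a unique parent: among all candidate children of $\sigma_\alpha$, the one singled out by the ``greatest'' condition is precisely the vertex whose parent, computed via Proposition \ref{kiuvgg}, turns out to be $\sigma_\alpha$.

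The step I expect to be the main obstacle is confirming that the ``least'' versus ``greatest'' optimality conditions genuinely correspond, rather than merely that the betweenness conditions match. Concretely, I must rule out the possibility that the $\sigma_\beta$ selected here has as its parent some vertex other than $\sigma_\alpha$. To handle this I would argue that if $\sigma_\beta$ is the greatest entry below $\sigma_\alpha$ satisfying the betweenness condition, and if by Proposition \ref{kiuvgg} its parent were some $\sigma_{\beta'} \ne \sigma_\alpha$ with $\sigma_\beta < \sigma_{\beta'} < \sigma_\alpha$, then $\sigma_{\beta'}$ would itself satisfy the present proposition's three conditions relative to $\sigma_\alpha$ while being larger than $\sigma_\beta$, contradicting maximality; and conversely the parent cannot lie above $\sigma_\alpha$ because the betweenness condition forces all entries between $\sigma_\beta$ and $\sigma_\alpha$ to be small. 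With this contradiction argument in place the identification is complete, and the proposition follows immediately.
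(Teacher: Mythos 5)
Your proposal is correct and follows exactly the route the paper intends: the paper offers no written proof, stating only that Proposition \ref{kiuvgg} ``immediately implies'' this one, and your argument supplies precisely the omitted details. In particular you correctly identify the one non-trivial point --- that the ``greatest'' condition here must be matched to the ``least'' condition in Proposition \ref{kiuvgg} --- and your maximality contradiction (any intermediate candidate parent $\sigma_{\beta'}$ with $\sigma_\beta < \sigma_{\beta'} < \sigma_\alpha$ would itself satisfy the three bulleted conditions relative to $\sigma_\alpha$) resolves it correctly.
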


Suppose $\sigma$ is a permutation in $S_n(132)$ such that $\sigma^{\sigma_j}$ is defined for some $j$. Using Propositions \ref{kiuvgg} and \ref{jiufygf} we can deduce the following about the construction of $T(\sigma)$:

\begin{itemize}
\item The vertex $\sigma_i$ is the left child of $\sigma_j$.
\item The vertex $\sigma_i$ has no left child.
\item The subtree of $T$ consisting of the right child of $\sigma_i$ and its descendants is the tree $T(\sigma_{i+1}\ldots\sigma_{j-1})$.
\item Suppose that there is some highest integer $m$ such that all elements in the set $\{\sigma_{j+1},\ldots,\sigma_m\}$ are all less than $\sigma_j$. Then the right child of $\sigma_j$ is the root of the tree $T(\sigma_{j+1}\ldots\sigma_m)$. If $j  =n$ or $\sigma_{j+1}>\sigma_j$, so that there is no such integer $m$, then $\sigma_j$ has no right child (that is, the tree $T(\sigma_{j+1}\ldots\sigma_m)$ is empty). 
\end{itemize}

\begin{figure}[H]
\caption{The tree $T(\sigma)$ for some $\sigma \in S_n(132)$ such that $\sigma^{\sigma_j}$ is defined for some $j$. The vertex $\sigma_j$ may be the left or right child of its parent vertex.}
\label{contrsubset}

\[
\psset{unit=0.8cm}
\begin{pspicture}(-2,-3)(2.5,2.5)
\qdisk(0,2.5){2.25pt}\rput(0,2.8){$n$}
\qdisk(-1,0){2.25pt}\qdisk(0,1){2.25pt}\qdisk(1,0){2.25pt}\qdisk(0,-1){2.25pt}
\rput(-1.35,0){$\sigma_i$}\rput(.4,1.1){$\sigma_j$}
\psline(-1,0)(0,-1)\psline(-1,0)(0,1)\psline(0,1)(1,0)
\pscurve(-.2,-.8)(1.1,-1.3)(1.2,-2.2)(-.2,-2.2)(-.2,-.8)
\rput(0,-3){$T(\sigma_{i+1}...\sigma_{j-1})$}\psline{->}(0,-2.8)(.4,-1.8)
\pscurve(.8,.2)(1.4,.3)(2.6,-.5)(1.6,-1.2)(.6,-.2)(.8,.2)
\rput(3,-2){$T(\sigma_{j+1}...\sigma_{m})$}\psline{->}(3,-1.7)(2,-.5)
\psline[linestyle=dashed](0,2.5)(0,1)
\end{pspicture}
\]
\end{figure}
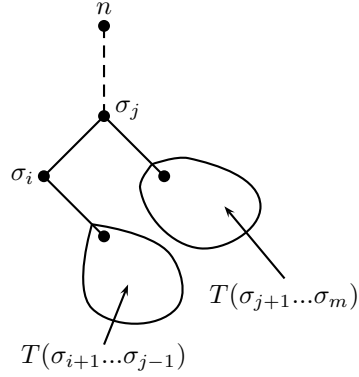

The following proposition describes how we may alter $T(\sigma)$ to obtain $T(\phi)$ where $\phi = \sigma^{\sigma_j}$ for some $j$. Figures \ref{contrsubset}, \ref{contrsubsettwo} and \ref{contrsubsetthr} give an example of this process. 

\begin{prop}
Suppose that $\phi =\sigma^{\sigma_j}$ is defined for some $j$. Suppose that $m$ is the highest integer such that $\sigma_{j+1},\ldots,\sigma_m$ are all less than $\sigma_j$. Then $T(\phi)$ is obtained from $T(\sigma)$ by the following steps:

\begin{itemize}
\item[(1)] Remove the subtrees $T(\sigma_{i+1}\ldots\sigma_{j-1})$ and $T(\sigma_{j+1}\ldots\sigma_m)$ from $T(\sigma)$. 

\item[(2)] Remove the vertex $\sigma_i$ from the remaining tree, and reattach it as the right child of $\sigma_j$. 

\item[(3)] Reattach the tree $T(\sigma_{i+1}\ldots\sigma_{j-1})$ so that its root is the right child of $\sigma_i$. 

\item[(4)] Reattach the tree $T(\sigma_{j+1}\ldots\sigma_m)$ so that its root is the right child of the vertex $\sigma_{j-1}$ (note that $\sigma_{j-1}$ is the lowest integer in the spine of $T(\sigma_{i+1}\ldots\sigma_{j-1})$ that contains its root).

\end{itemize}
\label{ghtdyhdh}
\end{prop}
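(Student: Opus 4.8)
The plan is to induct on $n$, exploiting that the largest entry $n$ sits at the root of both $T(\sigma)$ and $T(\phi)$, and to verify steps (1)--(4) by a direct computation in the single case where $n$ is the moved entry $\sigma_j$ itself. Write $n = \sigma_{m_0}$, and split according to whether $n \in L$, $n \in R$, or $n = \sigma_j$. Since every entry of $L$ exceeds $\sigma_j$ and every entry of $R$ is either less than $\sigma_i$ or greater than $\sigma_j$, these cases are exhaustive, and the case $n = \sigma_j$ occurs exactly when $L$ is empty and no entry of $R$ exceeds $\sigma_j$ (so that $m = n$).

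Consider first $n \in L$, i.e. $m_0 \le i-1$. The indices $i,j$ still satisfy the three defining conditions of the move inside the factor $\sigma_{m_0+1}\ldots\sigma_n$, so the move restricts to it, giving $(\sigma_{m_0+1}\ldots\sigma_n)^{\sigma_j}$; moreover positions $1,\ldots,m_0$ are untouched, so in $\phi$ the entry $n$ is again at position $m_0$, the left subtree $T(\sigma_1\ldots\sigma_{m_0-1})$ of $n$ is unchanged, and its right subtree is $T((\sigma_{m_0+1}\ldots\sigma_n)^{\sigma_j})$. Because the vertices $\sigma_i,\sigma_j,\sigma_{j-1}$ and the trees $T(\sigma_{i+1}\ldots\sigma_{j-1})$, $T(\sigma_{j+1}\ldots\sigma_m)$ all lie to the right of position $m_0$, steps (1)--(4) act entirely inside this right subtree, and the inductive hypothesis finishes the case. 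The case $n \in R$, where $m_0 > j > m$, is symmetric: the move restricts to $\sigma_1\ldots\sigma_{m_0-1}$, the right subtree of $n$ is unchanged, and again all five distinguished pieces lie in the left subtree, where the inductive hypothesis applies. Since each of these two cases strictly reduces the length, the recursion terminates in the third case.

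The heart of the argument is $n = \sigma_j$, where $L = \varnothing$, $R$ lies entirely below $\sigma_i$, and $m = n$, so that $\sigma = \sigma_i\cdot M\cdot n\cdot R$ and $\phi = n\cdot \sigma_i\cdot M\cdot R$ (here $M$ abbreviates $\sigma_{i+1}\ldots\sigma_{j-1}$ and $R = \sigma_{j+1}\ldots\sigma_n$). Thus $n$ is the root of both trees, with $\sigma_i$ the left child of $n$ in $T(\sigma)$ and the right child of $n$ in $T(\phi)$. The key fact I would establish is that every entry of $M$ exceeds every entry of $R$: were some $\sigma_\beta \in M$ smaller than some $\sigma_\gamma \in R$, the positions $\beta < j < \gamma$ would give $\sigma_\beta < \sigma_\gamma < n$, a forbidden $132$ pattern. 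Consequently, in $T(\phi) = T(n\cdot\sigma_i\cdot M\cdot R)$ the right subtree of $n$ is $T(\sigma_i\cdot M\cdot R)$, whose root is $\sigma_i$ (with no left child) and whose right subtree is $T(M\cdot R)$. Since between any two positions of $M$ there are no entries of $R$, Propositions \ref{kiuvgg} and \ref{jiufygf} show the induced structure on the $M$-vertices is exactly $T(M)$, and that $T(R) = T(\sigma_{j+1}\ldots\sigma_m)$ hangs as the right child of the last in-order vertex $\sigma_{j-1}$ of $T(M)$, i.e. the lowest vertex of the spine containing the root of $T(M)$. Reading this configuration against that of $T(\sigma)$ (root $n$, left child $\sigma_i$, right subtree of $\sigma_i$ equal to $T(M)$, right child of $n$ the root of $T(R)$, as in Figure \ref{contrsubset}) exhibits it as the result of precisely steps (1)--(4); the degenerate subcases $M = \varnothing$ (where $\sigma_{j-1} = \sigma_i$) and $R = \varnothing$ are consistent as well.

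The main obstacle I anticipate is this last case: one must check carefully that adjoining the strictly smaller, strictly later block $R$ to $M$ neither alters the internal shape $T(M)$ nor attaches $T(R)$ anywhere except at the bottom of the right spine of $T(M)$, both of which I would justify through Propositions \ref{kiuvgg} and \ref{jiufygf} using that no entry of $R$ lies between two positions of $M$. A secondary point needing care is confirming, in the cases $n\in L$ and $n\in R$, that all of $\sigma_i,\sigma_j,\sigma_{j-1},T(M),T(R)$ occupy a single subtree of the root, so that steps (1)--(4) localize there and the inductive hypothesis genuinely applies.
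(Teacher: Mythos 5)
Your proposal is correct, but it organizes the argument differently from the paper. The paper gives a direct, non-inductive verification: using Propositions \ref{kiuvgg} and \ref{jiufygf} it checks that parent--child relations are preserved separately on the complement set $\{\sigma_1,\ldots,\sigma_n\}-(\{\sigma_i,\ldots,\sigma_{j-1}\}\cup\{\sigma_{j+1},\ldots,\sigma_m\})$, on $\{\sigma_i,\ldots,\sigma_{j-1}\}$, and on $\{\sigma_{j+1},\ldots,\sigma_m\}$, and then identifies the two new attachment points; the claim that the complement subtree is unchanged is essentially asserted. You instead induct on $n$: when $n\in L$ or $n\in R$ the move and all five distinguished pieces live in a single subtree of the root, so the statement localizes and the inductive hypothesis applies, and the whole content is concentrated in the base case $\sigma_j=n$, where $\sigma=\sigma_i\cdot M\cdot n\cdot R$ and $\phi=n\cdot\sigma_i\cdot M\cdot R$ and you read off the tree of $M\cdot R$ explicitly via the same two propositions (the key observations --- that every entry of $M$ exceeds every entry of $R$, and that $T(R)$ can only attach at $\sigma_{j-1}$, the last in-order vertex of $T(M)$ --- are exactly right, and your treatment of the degenerate cases $M=\varnothing$, $R=\varnothing$ is consistent with the parenthetical in step (4)). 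What your route buys is a cleaner justification of step (1): the ``untouched'' part of the tree is literally peeled away by the recursion rather than argued about globally, at the cost of having to check that the move and the index $m$ restrict correctly to the relevant factor in the cases $n\in L$ and $n\in R$ (which you do). Both proofs ultimately rest on the same parent/child characterizations, so the difference is one of architecture rather than of underlying ideas.
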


\begin{proof}
The reader should refer to Propositions \ref{kiuvgg} and \ref{jiufygf} throughout every step of this proof. Consider all numbers in the set 
$$\{\sigma_1,\sigma_2,\ldots,\sigma_n\} -(\{\sigma_i, \sigma_{i+1},\ldots,\sigma_{j-1}\} \cup \{\sigma_{j+1},\ldots,\sigma_m\}).$$ A vertex is a left (respectively right) child of another vertex $w$ in this set in $T(\sigma)$, if and only if it is a left (repectively right) child of $w$ in $T(\phi)$. This confirms that step (1) is correct, i.e. that the subtree on the vertices $$\{\sigma_1,\sigma_2,\ldots,\sigma_n\} -(\{\sigma_i, \sigma_{i+1},\ldots,\sigma_{j-1}\} \cup \{\sigma_{j+1},\ldots,\sigma_m\})$$ is identical in $T(\sigma)$ and $T(\phi)$.\\

It is clear that $\sigma_i$ is the right child of $\sigma_j$ in $T(\phi)$. This confirms that step (2) is correct.\\

A vertex $v$ in the set $\{\sigma_{i},\ldots,\sigma_{j-1}\}$ is the left (respectively right) child of another vertex $w$ in this set in $T(\sigma)$, if and only if it is the left (respectively right) child of $w$ in $T(\phi)$. This confirms that step (3) is correct.\\

A vertex $v$ in the set $\{\sigma_{j+1}\ldots\sigma_m\}$ is a left (respectively right) child of another vertex $w$ in this set in $T(\sigma)$, if and only if it is a left (respectively right) vertex of $w$ in $T(\phi)$. Also, the root of $T(\sigma_{j+1}\ldots\sigma_m)$ is the right child of $\sigma_{j-1}$ in $T(\phi)$. This confirms step 4. 
\end{proof}

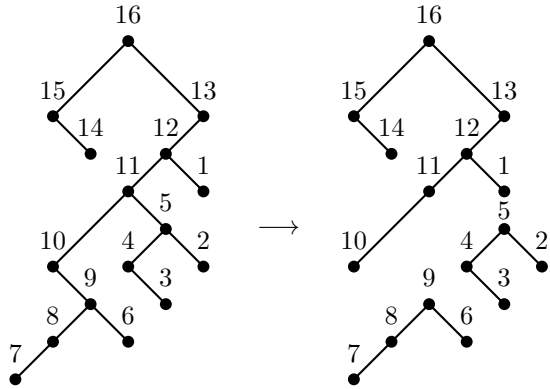
\begin{figure}[H]
\caption{An example of $T(\sigma)$ for $\sigma = (15,14,16,10,7,8,9,6,11,4,3,5,2,12,1,13)$, in which $\sigma_i = 10$ and $\sigma_j =11$. We then remove the subtrees $T(\sigma_{i+1}\ldots\sigma_{j-1}) = T(7896)$ and $T(\sigma_{j+1}\ldots\sigma_m) = T(4352)$.}
\label{contrsubset}

\[
\psset{unit=0.5cm}
\begin{pspicture}(-8.5,-5)(10,6)
\qdisk(-4,5){2.25pt}\qdisk(-6,3){2.25pt}\qdisk(-5,2){2.25pt}\qdisk(-2,3){2.25pt}
\qdisk(-3,2){2.25pt}\qdisk(-4,1){2.25pt}\qdisk(-6,-1){2.25pt}\qdisk(-5,-2){2.25pt}
\qdisk(-6,-3){2.25pt}\qdisk(-7,-4){2.25pt}\qdisk(-4,-3){2.25pt}\qdisk(-3,0){2.25pt}
\qdisk(-4,-1){2.25pt}\qdisk(-3,-2){2.25pt}\qdisk(-2,-1){2.25pt}\qdisk(-2,1){2.25pt}

\psline(-4,5)(-6,3)\psline(-6,3)(-5,2)\psline(-4,5)(-2,3)\psline(-2,3)(-3,2)
\psline(-3,2)(-4,1)\psline(-4,1)(-6,-1)\psline(-6,-1)(-5,-2)\psline(-5,-2)(-6,-3)
\psline(-6,-3)(-7,-4)\psline(-5,-2)(-4,-3)\psline(-4,1)(-3,0)\psline(-3,0)(-4,-1)
\psline(-4,-1)(-3,-2)\psline(-3,0)(-2,-1)\psline(-3,2)(-2,1)

\rput(-4,5.7){$16$}\rput(-6,3.7){$15$}\rput(-5,2.7){$14$}\rput(-2,3.7){$13$}
\rput(-3,2.7){$12$}\rput(-4,1.7){$11$}\rput(-6,-.3){$10$}\rput(-5,-1.3){$9$}
\rput(-6,-2.3){$8$}\rput(-7,-3.3){$7$}\rput(-4,-2.3){$6$}\rput(-3,.7){$5$}
\rput(-4,-.3){$4$}\rput(-3,-1.3){$3$}\rput(-2,-.3){$2$}\rput(-2,1.7){$1$}

\rput(0,0){$\longrightarrow$}

\qdisk(4,5){2.25pt}\qdisk(2,3){2.25pt}\qdisk(3,2){2.25pt}\qdisk(6,3){2.25pt}
\qdisk(5,2){2.25pt}\qdisk(4,1){2.25pt}\qdisk(2,-1){2.25pt}
\qdisk(6,0){2.25pt}\qdisk(5,-1){2.25pt}\qdisk(7,-1){2.25pt}\qdisk(6,-2){2.25pt}
\qdisk(4,-2){2.25pt}\qdisk(3,-3){2.25pt}\qdisk(2,-4){2.25pt}\qdisk(5,-3){2.25pt}
\qdisk(6,1){2.25pt}

\rput(4,5.7){$16$}\rput(2,3.7){$15$}\rput(3,2.7){$14$}\rput(6,3.7){$13$}
\rput(5,2.7){$12$}\rput(4,1.7){$11$}\rput(2,-.3){$10$}\rput(6,1.7){$1$}
\rput(4,-1.3){$9$}\rput(3,-2.3){$8$}\rput(2,-3.3){$7$}\rput(5,-2.3){$6$}
\rput(6,.5){$5$}\rput(5,-.3){$4$}\rput(6,-1.3){$3$}\rput(7,-.3){$2$}

\psline(4,5)(2,3)\psline(2,3)(3,2)\psline(4,5)(6,3)\psline(6,3)(5,2)
\psline(5,2)(4,1)\psline(4,1)(2,-1)\psline(5,2)(6,1)
\psline(6,0)(7,-1)\psline(6,0)(5,-1)\psline(5,-1)(6,-2)
\psline(4,-2)(5,-3)\psline(4,-2)(3,-3)\psline(3,-3)(2,-4)

\end{pspicture}
\]
\end{figure}

\begin{figure}[H]
\caption{Reattaching the vertex $\sigma_i=10$ so that it is the right child of $\sigma_j = 11$.}
\label{contrsubsettwo}

\[
\psset{unit=0.5cm}
\begin{pspicture}(-8.5,-1)(10,6)
\qdisk(-4,5){2.25pt}\qdisk(-6,3){2.25pt}\qdisk(-5,2){2.25pt}\qdisk(-2,3){2.25pt}
\qdisk(-3,2){2.25pt}\qdisk(-4,1){2.25pt}\qdisk(-6,-1){2.25pt}\qdisk(-2,1){2.25pt}

\psline(-4,5)(-6,3)\psline(-6,3)(-5,2)\psline(-4,5)(-2,3)\psline(-2,3)(-3,2)
\psline(-3,2)(-4,1)\psline(-4,1)(-6,-1)\psline(-3,2)(-2,1)

\rput(-4,5.7){$16$}\rput(-6,3.7){$15$}\rput(-5,2.7){$14$}\rput(-2,3.7){$13$}
\rput(-3,2.7){$12$}\rput(-4,1.7){$11$}\rput(-6,-.3){$10$}\rput(-2,1.7){$1$}

\rput(0,2){$\longrightarrow$}

\qdisk(4,5){2.25pt}\qdisk(2,3){2.25pt}\qdisk(3,2){2.25pt}\qdisk(6,3){2.25pt}
\qdisk(5,2){2.25pt}\qdisk(4,1){2.25pt}\qdisk(5,0){2.25pt}
\qdisk(6,1){2.25pt}

\rput(4,5.7){$16$}\rput(2,3.7){$15$}\rput(3,2.7){$14$}\rput(6,3.7){$13$}
\rput(5,2.7){$12$}\rput(4,1.7){$11$}\rput(5,.7){$10$}\rput(6,1.7){$1$}

\psline(4,5)(2,3)\psline(2,3)(3,2)\psline(4,5)(6,3)\psline(6,3)(5,2)
\psline(5,2)(4,1)\psline(5,2)(6,1)\psline(5,0)(4,1)

\end{pspicture}
\]
\end{figure}
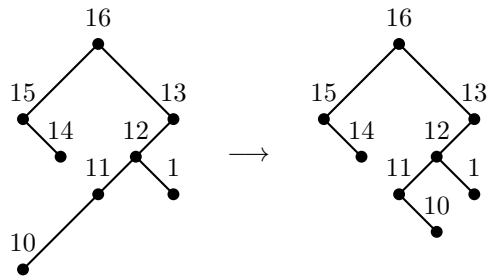

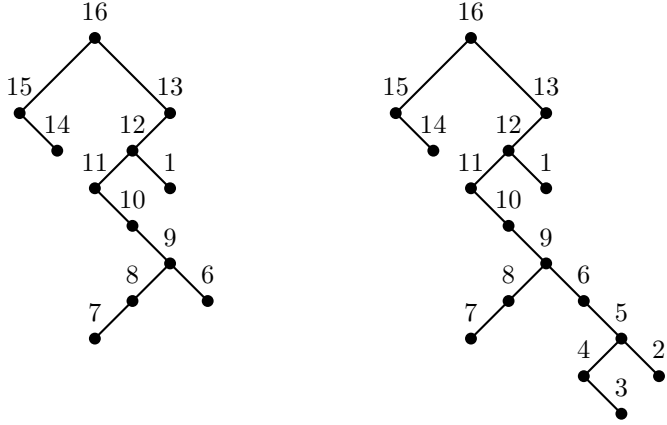
\begin{figure}[H]
\caption{a): Reattaching the tree $T(\sigma_{i+1}\ldots\sigma_{j-1})$, and b): reattaching the tree $T(\sigma_{j+1}\ldots\sigma_m)$.}
\label{contrsubsetthr}

\[
\psset{unit=0.5cm}
\begin{pspicture}(10,-6)(5,5)
\qdisk(0,4){2.25pt}\qdisk(-2,2){2.25pt}\qdisk(-1,1){2.25pt}\qdisk(2,2){2.25pt}
\qdisk(1,1){2.25pt}\qdisk(2,0){2.25pt}\qdisk(0,0){2.25pt}\qdisk(1,-1){2.25pt}
\qdisk(2,-2){2.25pt}\qdisk(1,-3){2.25pt}\qdisk(0,-4){2.25pt}\qdisk(3,-3){2.25pt}

\psline(-1,1)(-2,2)\psline(-2,2)(0,4)\psline(0,4)(2,2)\psline(2,2)(1,1)
\psline(1,1)(2,0)\psline(0,0)(3,-3)\psline(1,1)(0,0)\psline(2,-2)(0,-4)
%
%
\rput(0,4.7){$16$}\rput(-2,2.7){$15$}\rput(-1,1.7){$14$}\rput(2,2.7){$13$}
\rput(1,1.7){$12$}\rput(0,.7){$11$}\rput(1,-.3){$10$}\rput(2,-1.3){$9$}
\rput(1,-2.3){$8$}\rput(0,-3.3){$7$}\rput(3,-2.3){$6$}\rput(2,.7){$1$}

\qdisk(10,4){2.25pt}\qdisk(8,2){2.25pt}\qdisk(9,1){2.25pt}\qdisk(12,2){2.25pt}
\qdisk(11,1){2.25pt}\qdisk(12,0){2.25pt}\qdisk(10,0){2.25pt}\qdisk(11,-1){2.25pt}
\qdisk(12,-2){2.25pt}\qdisk(11,-3){2.25pt}\qdisk(10,-4){2.25pt}\qdisk(13,-3){2.25pt}
\qdisk(14,-4){2.25pt}\qdisk(15,-5){2.25pt}\qdisk(13,-5){2.25pt}\qdisk(14,-6){2.25pt}

\psline(9,1)(8,2)\psline(8,2)(10,4)\psline(10,4)(12,2)\psline(12,2)(11,1)
\psline(11,1)(12,0)\psline(10,0)(13,-3)\psline(11,1)(10,0)\psline(12,-2)(10,-4)
\psline(13,-3)(15,-5)\psline(14,-4)(13,-5)\psline(13,-5)(14,-6)
%
%
\rput(10,4.7){$16$}\rput(8,2.7){$15$}\rput(9,1.7){$14$}\rput(12,2.7){$13$}
\rput(11,1.7){$12$}\rput(10,.7){$11$}\rput(11,-.3){$10$}\rput(12,-1.3){$9$}
\rput(11,-2.3){$8$}\rput(10,-3.3){$7$}\rput(13,-2.3){$6$}\rput(12,.7){$1$}
\rput(14,-3.3){$5$}\rput(13,-4.3){$4$}\rput(14,-5.3){$3$}\rput(15,-4.3){$2$}
\end{pspicture}
\]
\end{figure}

%
%
%

\begin{prop}
Suppose that $\phi= \sigma^{\sigma_j}$ for some $j$, and suppose that $S(T(\sigma))=\langle a_1,\ldots,a_s \rangle$ is the spine structure of $T(\sigma)$. Then $S(T(\phi))$ is obtained from $\langle a_1,\ldots,a_s \rangle$ by merging two parts. That is, there exists indices $l, m$, such that the parts in $S(T(\phi))$ are $\{a_1,\ldots,a_s\} - \{a_l,a_m\} \cup \{a_l + a_m\}$. \label{uigouih}
\end{prop}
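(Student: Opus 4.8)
The plan is to read off the effect on spines directly from the explicit surgery described in Proposition \ref{ghtdyhdh}, which rebuilds $T(\phi)$ from $T(\sigma)$. Recall that a spine is a connected component obtained after deleting all left-child edges, i.e.\ a maximal path descending only through right children. Thus to track the spine structure I need only track, for each vertex, whether it is a left child or a right child (equivalently, which right-paths get joined together); the relative positions of everything else are irrelevant. Accordingly I would work entirely with the child relations established in Propositions \ref{kiuvgg} and \ref{jiufygf} and recorded in the structural description preceding Figure \ref{contrsubset}.

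First I would isolate the only two spines of $T(\sigma)$ that the surgery can touch. Since $\sigma_i$ is a \emph{left} child of $\sigma_j$ and has no left child, $\sigma_i$ begins its own spine; call it $B$, consisting of $\sigma_i$ followed by the top (root) spine of $T(\sigma_{i+1}\ldots\sigma_{j-1})$, which terminates at $\sigma_{j-1}$. The vertex $\sigma_j$ lies in a second spine, call it $A$, whose portion at and below $\sigma_j$ is $\sigma_j$ followed by the top spine of $T(\sigma_{j+1}\ldots\sigma_m)$, together with whatever ancestors of $\sigma_j$ lie above it along right-child edges. Write $a_l=|A|$ and $a_m=|B|$. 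Next I would verify that every spine other than $A$ and $B$ is unaffected: by the proof of Proposition \ref{ghtdyhdh} every vertex outside $\{\sigma_i,\ldots,\sigma_{j-1}\}\cup\{\sigma_{j+1},\ldots,\sigma_m\}$ keeps the same left/right child relationship in $T(\phi)$ as in $T(\sigma)$, and the internal structure of each removed subtree is preserved, so the only spine memberships that can change are those of the vertices of $A$ and $B$. In particular $\sigma_j$ keeps its parent, so the ancestor part of $A$ is common to both trees.

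Finally I would show that in $T(\phi)$ the spines $A$ and $B$ fuse into a single spine with vertex set the disjoint union $A\sqcup B$. Step (2) makes $\sigma_i$ the \emph{right} child of $\sigma_j$, so $\sigma_i$ now lies on the same right-path as $\sigma_j$; step (3) keeps the root of $T(\sigma_{i+1}\ldots\sigma_{j-1})$ as the right child of $\sigma_i$, so the top spine of that subtree (ending at $\sigma_{j-1}$) continues the path; and step (4) attaches the root of $T(\sigma_{j+1}\ldots\sigma_m)$ as the right child of $\sigma_{j-1}$, continuing it further. Reading the resulting right-path top to bottom yields the ancestors of $\sigma_j$, then $\sigma_j$, then $\sigma_i$, then the top spine of $T(\sigma_{i+1}\ldots\sigma_{j-1})$ ending at $\sigma_{j-1}$, then the top spine of $T(\sigma_{j+1}\ldots\sigma_m)$, which is exactly $A\cup B$ as a disjoint union; hence the new spine has $a_l+a_m$ vertices. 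Combined with the previous paragraph, $S(T(\phi))$ is obtained from $\langle a_1,\ldots,a_s\rangle$ by deleting $a_l$ and $a_m$ and inserting $a_l+a_m$, which is the asserted merge of two parts.

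The main obstacle will be the bookkeeping in this last step: one must confirm that the four listed pieces concatenate into a single right-path with no repeated or omitted vertex, and handle the degenerate cases where $M=\{\sigma_{i+1},\ldots,\sigma_{j-1}\}$ is empty (so that $\sigma_{j-1}=\sigma_i$) or where $T(\sigma_{j+1}\ldots\sigma_m)$ is empty (so that $\sigma_j$ has no right child). In each such case one checks that the bottom vertex of $B$ is precisely the vertex $\sigma_{j-1}$ to which $T(\sigma_{j+1}\ldots\sigma_m)$ is reattached, so the fusion still produces a single spine and the count $a_l+a_m$ is unchanged.
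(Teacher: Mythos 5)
Your proposal is correct and follows essentially the same route as the paper, whose proof is simply the observation that the surgery of Proposition \ref{ghtdyhdh} merges the spine of $T(\sigma)$ containing $\sigma_i$ with the spine containing $\sigma_j$ while leaving all other spines untouched. Your write-up just makes explicit the bookkeeping (identifying the two affected spines, checking they fuse into a single right-path of size $a_l+a_m$, and handling the degenerate cases) that the paper leaves implicit.
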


\begin{proof}
By Proposition \ref{ghtdyhdh} the spine structure of $T(\phi)$ is obtained from the spine structure of $T(\sigma)$, by merging the spine of $T(\sigma)$ that contains $\sigma_i$ and the spine of $T(\sigma)$ that contains $\sigma_j$.
\end{proof}

The following definition appears in \cite{ru}. A \emph{left-justified} binary tree is is a binary tree in which every vertex that is a right child of its parent does not have a left child.

\begin{prop}
Suppose that $T_1,T_2 \in \mathcal{T}_n$, and that the spine structure of $T_2$ is obtained from the spine structure of $T_1$ by merging two parts. Then there exists $\sigma \in S_n(132)$ such that $\phi = \sigma^{\sigma_j}$ is defined for some $j$, and such that $T(\sigma)$ has the same spine structure as $T_1$, and $T(\phi)$ has the same spine structure as $T_2$. \label{juyfc}
\end{prop}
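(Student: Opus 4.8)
The plan is to work entirely on the tree side of the bijection $\mathcal{T}_n \leftrightarrow S_n(132)$ and to exhibit a single convenient tree. Since the spine structure of a tree depends only on the multiset of sizes of its spines, and since Proposition \ref{juyfc} only asks us to \emph{match} the spine structures of $T_1$ and $T_2$ (not the trees themselves), it suffices to produce one $\sigma \in S_n(132)$ together with an index $j$ for which $\sigma^{\sigma_j}$ is defined, such that $S(T(\sigma)) = S(T_1)$ and $S(T(\sigma^{\sigma_j})) = S(T_2)$. Write $S(T_1) = \langle a_1,\ldots,a_s\rangle$ and suppose $S(T_2)$ is obtained by merging the two parts $a_l$ and $a_m$; note that this forces $s \ge 2$.

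First I would record a structural fact about $T(\rho)$ for $\rho \in S_n(132)$: every subtree has a contiguous set of values. This follows by a one-line induction from the property that all entries to the left of the maximum exceed all entries to its right, so the left subtree of any vertex carries the top values of that vertex's (contiguous) subtree and its right subtree the bottom values. In particular, whenever a vertex $v$ has a left child $w$ we have $w = v-1$, and, more usefully, every vertex lying in the left subtree of a proper ancestor $u$ of $v$ whose right subtree contains $v$ must exceed $v$ (the left subtree of $u$ dominates its right subtree, which contains $v$).

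Next I would construct the tree $T$ as a \emph{staircase} of $s$ spines $P_1,\ldots,P_s$: realise each $P_t$ as a right-chain (a decreasing block), let $P_1$ contain the root, and for each $t$ attach the root of $P_{t+1}$ as the left child of the bottom vertex of $P_t$. This is a valid binary tree for any choice of part sizes, and deleting the left-edges recovers exactly the components $P_1,\ldots,P_s$, so $S(T) = \langle a_1,\ldots,a_s\rangle = S(T_1)$ once we take the sizes of $P_1,\ldots,P_s$ to be the parts $a_1,\ldots,a_s$ in any order. I would specifically set $|P_{s-1}| = a_m$ and $|P_s| = a_l$, and let $\sigma$ be the permutation with $T(\sigma) = T$.

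Finally I would verify the move. Let $\sigma_i$ be the root of $P_s$ and $\sigma_j$ the bottom vertex of $P_{s-1}$, so that $\sigma_i$ is the left child of $\sigma_j$ and, being the root of the last spine of the staircase, $\sigma_i$ has no left child. The three conditions defining $\sigma^{\sigma_j}$ then hold: $\sigma_i < \sigma_j$ is immediate (and in fact $\sigma_j = \sigma_i + 1$ by the contiguity fact); the entries at positions strictly between $i$ and $j$ are precisely the right descendants of $\sigma_i$, hence all smaller than $\sigma_i$ (by Propositions \ref{kiuvgg} and \ref{jiufygf}); and every entry preceding position $i$ either equals, or lies in the left subtree of, a proper ancestor $u$ of $\sigma_j$ whose right subtree contains $\sigma_j$, and all such vertices exceed $\sigma_j$ by the dominance observation above. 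Thus $\sigma^{\sigma_j}$ is defined, and by Propositions \ref{ghtdyhdh} and \ref{uigouih} it merges the spine of $\sigma_i$ (namely $P_s$, of size $a_l$) with the spine of $\sigma_j$ (namely $P_{s-1}$, of size $a_m$), leaving all other spines unchanged; hence $S(T(\sigma^{\sigma_j}))$ equals $S(T_1)$ with $a_l$ and $a_m$ merged, which is $S(T_2)$. The one step that needs genuine care is this final verification that the move is actually available at the chosen vertex --- in particular the requirement that every entry preceding $\sigma_i$ exceeds $\sigma_j$ --- and the staircase is designed precisely so that the dominance observation makes this automatic, irrespective of how the remaining spines $P_1,\ldots,P_{s-2}$ are placed.
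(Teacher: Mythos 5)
Your proof is correct, and it follows the same overall strategy as the paper: exhibit one explicit tree with spine structure $S(T_1)$ on which the move $\sigma \mapsto \sigma^{\sigma_j}$ is available between the two designated spines, and then appeal to Proposition \ref{uigouih} to see that the move merges exactly those two spines. The only real difference is the choice of witness tree. The paper uses a \emph{left-justified} tree, in which the spine roots form the leftmost path $n, n-1, \ldots, n-s+1$ at the top, and places the two spines to be merged at the bottom two vertices of that path; with that choice $\sigma_i = \sigma_1$, so the set $L$ is empty and the condition that every entry preceding $\sigma_i$ exceeds $\sigma_j$ holds vacuously. Your staircase instead hangs each spine root off the \emph{bottom} of the previous spine (so your tree is generally not left-justified), which puts the non-bottom vertices of $P_1,\ldots,P_{s-1}$ to the left of $\sigma_i$ and obliges you to prove the dominance observation to handle them. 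That extra work is carried out correctly, but the paper's placement of the two relevant spines at the end of the leftmost path is the slicker choice, since it makes the hardest of the three conditions trivial.
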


\begin{proof}
Suppose that $\langle a_1,\ldots,a_s \rangle$ is the spine structure of $T_1$, and that $a_l$ and $a_m$ are merged to give the spine structure of $T_2$. We will construct a new tree $T \in \mathcal{T}_n$ that is left-justified, with the same spine structure as $T_1$. If we label the vertices of any left-justified tree with $s$ parts in pre-order, then the vertices $n-s+1,n-s+2,\ldots,n-1,n$ are a path of vertices in this tree, in which each vertex in this sequence is the left child of the following vertex. Let $T$ be such a tree, so that the spine containing vertex $n-s+1$ has $a_l$ vertices, and the spine containing vertex $n-s+2$ has $a_m$ vertices. The order of the remaining spines in $T$ is irrelevant. Then $T$ is equal to $T(\sigma)$ for some $\sigma \in S_n(132)$ such that $\phi=\sigma^{n-s+2}$ is defined, and the tree $T(\phi)$ has the same spine structure as $T_2$. 
\end{proof}

We can now proceed to prove Theorem \ref{libkg}

\begin{thm}
Suppose $\tau$ and $\mu$ are elements in $S_k(132)$. If the spine structure of $T(\tau)$ is less than or equal to the spine structure of $T(\mu)$ in refinement order, then for all $n$, $A_n(\tau) \le A_n(\mu)$. \label{libkg}
\end{thm}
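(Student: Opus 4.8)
The plan is to pass from the abstract refinement relation on spine structures to a chain of concrete single-step moves of the form $\sigma \mapsto \sigma^{\sigma_j}$, and then to glue together the resulting inequalities using Theorem \ref{tfersx} and Theorem \ref{hgchcv}. Since $n$ is fixed throughout, I first dispose of the range $n < k$: here both $A_n(\tau)$ and $A_n(\mu)$ vanish, because a permutation of length $n < k$ cannot contain a pattern of length $k$, so the inequality is trivial and we may assume $n \ge k$.

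Next I would decompose the refinement into single merges. By hypothesis $S(T(\tau)) \le_R S(T(\mu))$, so by definition $S(T(\mu))$ is obtained from $S(T(\tau))$ by merging subsets of its parts. Merging a subset of size $d$ into one part can be carried out by $d-1$ successive pairwise merges, so there is a chain of spine structures
$$S(T(\tau)) = P_0,\ P_1,\ \ldots,\ P_r = S(T(\mu)),$$
each with parts summing to $k$, in which $P_{t+1}$ is obtained from $P_t$ by merging exactly two parts. As in the proof of Proposition \ref{juyfc}, each $P_t$ is the spine structure of some element of $S_k(132)$, so by Theorem \ref{hgchcv} the value of $A_n$ is constant, equal to some number $a_t$, across all length-$k$ permutations whose tree has spine structure $P_t$.

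Finally I would chain the local inequalities. Fix $t$. Because $P_{t+1}$ arises from $P_t$ by merging two parts, Proposition \ref{juyfc} furnishes a permutation $\sigma \in S_k(132)$ for which $\sigma^{\sigma_j}$ is defined, with $T(\sigma)$ of spine structure $P_t$ and $T(\sigma^{\sigma_j})$ of spine structure $P_{t+1}$. Theorem \ref{tfersx} then yields $A_n(\sigma) \le A_n(\sigma^{\sigma_j})$, i.e. $a_t \le a_{t+1}$. Since $\tau$ realizes $P_0$ and $\mu$ realizes $P_r$, stringing these together gives
$$A_n(\tau) = a_0 \le a_1 \le \cdots \le a_r = A_n(\mu),$$
which is the desired conclusion.

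I do not expect a genuinely hard step once Theorem \ref{tfersx} is available; the only point demanding care is the interaction of the three ingredients. The permutation produced by Proposition \ref{juyfc} at step $t$ need bear no relation to the one at step $t-1$, so the chaining is legitimate precisely because Theorem \ref{hgchcv} guarantees that $A_n$ depends on a permutation only through its spine structure. This is what makes the intermediate quantities $a_t$ well defined and lets us assemble the per-step inequalities into the global one.
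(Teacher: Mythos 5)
Your proposal is correct and follows essentially the same route as the paper: reduce to a single pairwise merge of parts, realize that merge concretely via Proposition \ref{juyfc}, apply Theorem \ref{tfersx} to the realizing permutation, and transfer the inequality back to $\tau$ and $\mu$ via Theorem \ref{hgchcv}. You merely make explicit two points the paper leaves implicit --- the trivial case $n<k$ and the fact that each intermediate spine structure in the chain is realized by some permutation, so the quantities $a_t$ are well defined --- which is a reasonable elaboration rather than a different argument.
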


\begin{proof}
We may assume that the spine structure of $T(\mu)$ is obtained from the spine structure of $T(\tau)$ by merging two parts, since if this implies $A_n(\tau) \le A_n(\mu)$, then the theorem also holds if $S(T(\mu))$ is obtained from $S(T(\tau))$ by merging any subsets of parts. By Proposition \ref{juyfc}, there exists $\bar \tau$ and $j$ such that $\bar \tau^{\tau_j}$ is defined, $S(T(\tau)) = S(T(\bar \tau))$ and $S(T(\mu)) = S(T(\bar \tau^{\tau_j}))$. By Theorem \ref{hgchcv}, $A_n(\tau) = A_n(\bar \tau)$, and $A_n(\mu)=A_n(\bar \tau^{\tau_j})$. By Theorem \ref{tfersx}, $A_n(\bar \tau) \le A_n(\bar \tau^{\tau_j})$, so that $A_n(\tau) \le A_n(\mu)$. 
\end{proof}

The following proposition is used to prove Theorem \ref{asef}. 
\begin{prop}
Suppose that $\tau \in S_k(132)$, and that for some $j$, $\mu=\tau^{\tau_j}$ is defined. Then there exists a permutation $\bar{\tau} \in S_k(132)$ such that $\bar{\mu}=\bar{\tau}^k$ is defined, and $S(T(\tau))=S(T(\bar{\tau})))$ and $S(T(\mu))=S(T(\bar{\mu}))$. \label{asdf}
\end{prop}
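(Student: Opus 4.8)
The plan is to reduce the statement to an explicit construction of a binary tree carrying the prescribed spine data, in the spirit of Proposition \ref{juyfc}, but arranged so that the element being moved is forced to be the maximum value $k$. In other words, I want to show that in realizing a two-part merge of spine structures one may always take the realizing move to act on the root of the tree.

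First I would record the abstract data. Writing $S(T(\tau)) = \langle a_1,\ldots,a_s\rangle$, Proposition \ref{uigouih} tells us that $S(T(\mu))$ is obtained by merging two of these parts, say $a_l$ and $a_m$ (by the proof of Proposition \ref{uigouih}, via Proposition \ref{ghtdyhdh}, these are the sizes of the spine of $T(\tau)$ containing $\tau_i$ and the spine containing $\tau_j$). Since a spine structure remembers only the multiset of spine sizes, it suffices to produce one $\bar\tau \in S_k(132)$ whose tree has spine sizes $\{a_1,\ldots,a_s\}$, for which $\bar\mu = \bar\tau^k$ is defined, and for which this move merges two spines of sizes $a_l$ and $a_m$ while leaving the rest fixed.

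Next I would build the tree $\bar T \in \mathcal{T}_k$ by hand. Recall that in the pre-order labelling the root is $k$, and whenever the root has a left child that child is labelled $k-1$. I would give the root a rightward spine (a chain of right children) of length $a_m$; attach $k-1$ as the left child of the root, give it a rightward spine of length $a_l$, and ensure $k-1$ has no left child of its own; and realize the remaining parts $\{a_1,\ldots,a_s\}\setminus\{a_l,a_m\}$ as further rightward spines hung, as left children, off vertices other than $k-1$ (for instance chained below the bottom of the root's spine). There are always enough available vertices to host these $s-2$ non-root spine-tops, so $\bar T$ exists. Letting $\bar\tau$ be the permutation with $T(\bar\tau)=\bar T$ and $\bar\mu=\bar\tau^k$, I would then verify two points. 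Using the structural description following Proposition \ref{jiufygf}, the move with $\bar\tau_j=k$ is defined precisely when the root has a left child that itself has no left child; this holds by construction, with $\bar\tau_i=k-1=\bar\tau_1$, the left block $L$ empty, and every entry of $M$ smaller than $k-1$ since the left subtree of the root has maximum value $k-1$. Moreover, by the mechanism in the proof of Proposition \ref{uigouih}, $\bar\tau^k$ merges the spine containing $\bar\tau_i=k-1$ (of size $a_l$) with the spine containing $\bar\tau_j=k$ (of size $a_m$). Hence $S(T(\bar\tau))=\langle a_1,\ldots,a_s\rangle=S(T(\tau))$, and $S(T(\bar\mu))$ has parts $\{a_1,\ldots,a_s\}\setminus\{a_l,a_m\}\cup\{a_l+a_m\}=S(T(\mu))$, as desired.

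The main obstacle is the bookkeeping of the explicit tree: one must check that a binary tree on $k$ vertices with the prescribed multiset of spine sizes exists subject to the two extra constraints (the root's spine has size exactly $a_m$, and the root's left child has a size-$a_l$ spine and no left child), and that these two constraints are precisely what makes $\bar\tau^k$ both \emph{defined} and \emph{spine-merging} in the intended way. This is routine once one observes that every non-root spine-top may be attached as a left child of essentially any vertex other than $k-1$, of which there are always enough, so the only genuine content is the equivalence between the move acting on the root and the "left child of the root having no left child" condition.
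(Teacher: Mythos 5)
Your construction is essentially the paper's own proof: both build an explicit tree realizing the spine structure $\langle a_1,\ldots,a_s\rangle$ in which the two spines to be merged are the root's right-spine and the right-spine of the root's left child $k-1$ (which is given no left child), with the remaining spines chained off below as left children, so that $\bar{\tau}^{k}$ is defined and merges exactly the intended pair. Your version is in fact slightly more careful than the paper's, since you explicitly verify the equivalence between ``$\bar{\tau}^{k}$ is defined'' and ``the root's left child has no left child,'' and you check that the move merges spines of sizes $a_l$ and $a_m$, both of which the paper leaves implicit.
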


\begin{proof}
Suppose the spine of $T(\tau)$ that contains $\tau_j$ contains $l$ vertices, and that the spine of $T(\tau)$ that contains $\tau_i=\tau_j-1$ contains $m$ vertices. Create a binary tree $T$, with the same spine structure as $S(T(\tau))$ as follows. First, form a left-justified binary tree $T_1$ with two spines, one of length $m$, and the other of length $l$, so that the root of the length $l$ spine is the root of $T_1$. Form a left-justified binary tree $T_2$ whose spines consist of the spines of $T$ that do not contain $\tau_i$ or $\tau_j$. Form $T$ by adjoining $T_1$ and $T_2$ so that the root of $T_2$ is the left child of the leaf contained in the same spine as $\sigma_j$ of $T_1$. See Figure \ref{kugkgkg} for an example.
\end{proof}

\begin{figure}[H]
\caption{Let $\tau \in S_9(132)$ be the permutaiton $\tau =895436217$, and let $\mu =\tau^6$. Then $T(876934251)$ has the same spine structure as $T(\tau)$, and $T(876934251^9)$ has the same spine structure as $T(\mu)$. The tree shown on the left is $T(\tau)$ and the tree shown on the right is $T(876934251)$}\label{kugkgkg}
\[
\psset{unit=0.5cm}
\begin{pspicture}(10,-6)(5,5)

\qdisk(0,4){2.25pt}\qdisk(-2,2){2.25pt}\qdisk(2,2){2.25pt}\qdisk(3,-1){2.25pt}
\qdisk(1,1){2.25pt}\qdisk(2,0){2.25pt}\qdisk(0,0){2.25pt}\qdisk(1,-1){2.25pt}
\qdisk(2,-2){2.25pt}

\psline(-2,2)(0,4)\psline(0,4)(2,2)\psline(2,2)(1,1)\psline(3,-1)(2,0)
\psline(1,1)(2,0)\psline(0,0)(2,-2)\psline(1,1)(0,0)

\rput(0,4.7){$9$}\rput(-2,2.7){$8$}\rput(2,2.7){$7$}\rput(3,-.3){$1$}
\rput(1,1.7){$6$}\rput(0,.7){$5$}\rput(1,-.3){$4$}\rput(2,-1.3){$3$}
\rput(2,.7){$2$}

\qdisk(10,4){2.25pt}\qdisk(8,2){2.25pt}\qdisk(9,1){2.25pt}\qdisk(10,0){2.25pt}
\qdisk(13,1){2.25pt}\qdisk(14,0){2.25pt}\qdisk(12,0){2.25pt}\qdisk(13,-1){2.25pt}
\qdisk(11,-1){2.25pt}

\psline(10,0)(8,2)\psline(8,2)(10,4)\psline(10,4)(14,0)\psline(13,1)(11,-1)
\psline(12,0)(13,-1)

\rput(10,4.7){$9$}\rput(8,2.7){$8$}\rput(9,1.7){$7$}\rput(10,.7){$6$}
\rput(13,1.7){$5$}\rput(14,.7){$1$}\rput(12,.7){$4$}\rput(13,-.3){$2$}
\rput(11,-.3){$3$}

\end{pspicture}
\]
\end{figure}
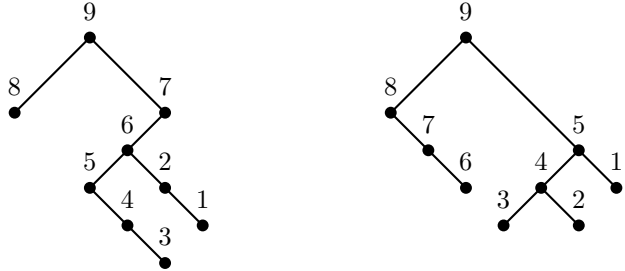

\begin{thm}
Suppose $\tau \in S_k(132)$, and that for some $j$, $\mu=\tau^{\tau_j}$ is defined. Then for any integer $n > k$, we have $A_n(\mu) > A_n(\tau)$. \label{asef} 
\end{thm}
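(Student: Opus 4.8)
The plan is to reduce to the case $\mu=\tau^k$ and then induct on $n$, with the single strict comparison supplied for large $n$ by the inductive hypothesis and for $n=k+1$ by a direct non-surjectivity argument. First I would apply Proposition \ref{asdf}: it produces $\bar\tau\in S_k(132)$ with $\bar\mu=\bar\tau^k$ defined and $S(T(\tau))=S(T(\bar\tau))$, $S(T(\mu))=S(T(\bar\mu))$, so that Theorem \ref{hgchcv} gives $A_n(\tau)=A_n(\bar\tau)$ and $A_n(\mu)=A_n(\bar\mu)$ for every $n\ge k$. It therefore suffices to prove the statement for $\bar\tau,\bar\mu$, and renaming these $\tau,\mu$ we may assume $\tau_j=k$. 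This forces $i=1$, $\tau_1=k-1$ and $L=\emptyset$, and every entry of $R$ lies below every entry of $M\cup\{\tau_1\}$; choosing the representative built in Proposition \ref{asdf} we may moreover take $M$ to be decreasing, so that $\tau=(k-1)\,M\,k\,R$ and $\mu=k\,(k-1)\,M\,R$ with $M$ a decreasing block.

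Next I would induct on $n$, assuming $A_{n'}(\mu)>A_{n'}(\tau)$ for all $n'$ with $k<n'<n$ together with the non-strict inequalities of Theorem \ref{tfersx}. For the step $n\ge k+2$ I reuse the block decomposition from the proof of Theorem \ref{tfersx}, where $A_n(\tau)\le A_n(\mu)$ is assembled from inequalities between matching blocks of occurrences indexed by the position of the letter $n$. The block of case (a) in which $n$ sits to the left of the whole pattern (the index $\xi=1$) contributes $\sum_{\alpha=0}^{n-k-1}|S_\alpha(132)|\,A_{n-1-\alpha}(\tau)$ to $A_n(\tau)$ and the same sum with $\mu$ to $A_n(\mu)$; the prefixes are empty, so the coefficients $|S_\alpha(132)|$ coincide, and the term $\alpha=0$ compares $A_{n-1}(\tau)$ with $A_{n-1}(\mu)$. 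Because $n-1>k$, the inductive hypothesis makes this one term strict, while all remaining terms and all other blocks are $\ge$ by Theorem \ref{tfersx}; summing yields $A_n(\mu)>A_n(\tau)$.

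The base case $n=k+1$ is where I expect the real work to lie. Here case (a) yields only the non-strict inequality (the occurrences of $\tau$ and of $\mu$ that avoid the maximal letter are compared by Theorem \ref{tfersx}), so the strictness must come from case (b). In the reduced situation the global maximum of any $\sigma\in S_{k+1}(132)$ can only play the largest letter of the pattern, so $\mathcal{O}_{k+1}(\tau)$ consists entirely of the occurrences handled by the injection $\psi\colon\mathcal{O}_{k+1}(\tau)\to\mathcal{O}_{k+1}(\mu)$ of Theorem \ref{tfersx}, and likewise every occurrence in $\mathcal{O}_{k+1}(\mu)$ has $k+1$ as its leading letter. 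I would therefore show that $\psi$ is not surjective by exhibiting one occurrence of $\mu$ outside its image. Take $\bar\sigma\in S_{k+1}(132)$ obtained from a copy of $\mu$ (with $k+1$ playing $\mu_1=k$) by appending a single extra letter $x$ at the far right, whose value lies just above the smallest entry of $M\cup\{\tau_1\}$ (equivalently, just below $k$ when $M=\emptyset$). Since $M$ is decreasing, every letter below $x$ occurs to the right of every letter above $x$, so $x$ cannot be the ``$2$'' of a $132$-pattern and $\bar\sigma$ is $132$-avoiding. On the other hand, in the reconstruction of the proof of Theorem \ref{tfersx} the index $\beta$ is forced to be the position of the last entry playing $M\cup\{\tau_1\}$, since the $R$-players must lie to its right and the $M\cup\{\tau_1\}$-players to its left; then $x$, pushed into the low block, exceeds the smallest $M\cup\{\tau_1\}$-entry sitting in the high block, violating the domination condition, so no valid $\beta$ exists and the occurrence is not in the image. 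Hence $|\mathcal{O}_{k+1}(\mu)|>|\mathcal{O}_{k+1}(\tau)|$, and with the case-(a) inequality this gives $A_{k+1}(\mu)>A_{k+1}(\tau)$, completing the induction.

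The main obstacle is precisely this base case: checking that the appended letter cannot be absorbed into the high block for any choice of $\beta$, and that $132$-avoidance survives, is exactly what forces the reduction to a decreasing $M$ and the placement of $x$ at the extreme right. As an independent check one can compute $A_{k+1}(\pi)=3k+1+\sum_i\binom{a_i}{2}$, where $\langle a_1,\ldots,a_s\rangle=S(T(\pi))$; merging the two parts $a_l,a_m$ as in Proposition \ref{uigouih} raises this by exactly $a_l a_m\ge 1$, reconfirming $A_{k+1}(\mu)>A_{k+1}(\tau)$.
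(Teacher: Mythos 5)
Your overall route is the same as the paper's: reduce via Proposition \ref{asdf} to the case $\tau_j=k$, then argue that the injection $\psi$ from the proof of Theorem \ref{tfersx} fails to be surjective; the only genuine additions are the induction on $n$ (whose step, extracting strictness from the $\alpha=0$ term of the $\xi=1$ block via $A_{n-1}(\tau)<A_{n-1}(\mu)$, is sound) and a different choice of witness occurrence. The gap is in the base case, and it occurs precisely when $R=\emptyset$, i.e.\ when after the reduction $k$ is the \emph{last} letter of $\tau$ (e.g.\ $\tau=12$, $\mu=21$). Your claim that ``the index $\beta$ is forced to be the position of the last entry playing $M\cup\{\tau_1\}$'' uses the condition that the $R$-players lie in positions $>\beta$; when $R=\emptyset$ that condition is vacuous, so one may take $\beta=n$, the appended letter $x$ is absorbed into the block $u$, and the reconstruction returns a legitimate $132$-avoiding preimage, namely $u$ followed by $n$ at the far right playing $\tau_k=k$. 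Concretely, for $\tau=12$, $\mu=21$, $n=3$ your witness is $\bar\sigma=312$ with occurrence $31$, and this equals $\psi((123,13))$. Worse, no witness can exist here: $|\mathcal{O}_3(12)|=|\mathcal{O}_3(21)|=5$, so $\psi$ is a bijection and the strict inequality $A_3(12)=7<8=A_3(21)$ lives entirely among the occurrences avoiding $n$. (You have inherited rather than introduced this defect: the paper's own witness, with $\sigma_1=n$ and $\sigma_n=n-1$, fails in exactly the same case for exactly the same reason, since $n-1$ can likewise be absorbed into $u$ when $R=\emptyset$. Your closing formula $A_{k+1}(\pi)=3k+1+\sum_i\binom{a_i}{2}$ is correct and would settle the base case, but it is asserted as a check, not proved, so it cannot carry the argument as written.)

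To repair the base case you need a source of strictness that survives $R=\emptyset$, and one is available among the occurrences \emph{not} containing $n$: in an occurrence of $\mu$ the letter $n$ may be inserted between the letters playing $\mu_1=\tau_j$ and $\mu_2=\tau_1$ (the triple formed with the $\mu_1$-player has pattern $231$, not $132$, so at least one such $\sigma\in S_n(132)$ exists for every $n>k$, e.g.\ the $\mu_1$-player, then $n$, then the rest of the occurrence, then the unused values at the bottom right). In the block-matching of the proof of Theorem \ref{tfersx} the $\tau$-blocks land on the positions before $\mu_1$ and after $\mu_j,\dots,\mu_k$, so this block of occurrences of $\mu$ is matched to no block of occurrences of $\tau$; counting it gives $|\mathcal{O}_{n'}(\mu)|\ge|\mathcal{O}_{n'}(\tau)|+1$ outright, which combined with $|\mathcal{O}_{n}(\mu)|\ge|\mathcal{O}_{n}(\tau)|$ proves the theorem directly and would let you dispense with both the induction and the non-surjectivity argument.
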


\begin{proof}
By Proposition \ref{asdf} we may assume that $\tau_j =k$. Then it is possible that $n$ occurs as $\tau_j$. We have shown in Theorem \ref{tfersx} that $A_n(\mu) \ge A_n(\tau)$. This was done in two steps. First, we argued that occurrences of $\tau$ in $132$-avoiding permutations that do not contain $n$, are at least as numerous as occurrences of $\mu$ that do not contain $n$. For occurrence of $\tau$ that do contain $n$, in which $\tau_j=k$, we defined an injective map from the set of occurrences of $\tau$ that contain $n$, to the set of occurrence of $\mu$ that contain $n$. We now show that this map is not onto. \\

Suppose $n>k$. Then there exists some $ \sigma$ in $S_n(132)$, which contains the pattern $\mu$ such that $n$ occurrs as $\mu_1 =k$, and such that $\sigma_n =n-1$, and $\sigma_n$ is not an element in the occurrence of $\mu$. We require that $\sigma_1=n$ in this case. Recall the proof of Theorem \ref{tfersx}. If such an occurrence of $\mu$ is mapped to by an occurrence of $\tau$, then the occurrence of $\tau$ is obtained by shifting $n$ so that entries occurring as elements in $M \cup \{\mu_2\}$ (note $\mu_2  =\mu_i$) are to its left, and entries occurring as elements in $R$ are to its right. However such a permutation would not be in $S_n(132)$, because $\sigma_{s}n(n-1)$ forms a $132$ pattern, where $\sigma_s$ is the occurrence of $\mu_{2}$. Hence, in this case $A_n(\mu) > A_n(\tau)$.
\end{proof}

\end{section}

\begin{section}{A counterexample to Conjecture 1.2}

In this section we provide a counterexample to Conjecture \ref{hytdc}, by showing that for all $n \ge 4$ we have $A_n(3241) \le A_n(3421)$. This provides a contradiction since $S(T(3241))=\langle 2,2\rangle$ and $S(T(3421)) =\langle 3,1\rangle$ are incomparable in refinement order. 

\begin{prop}
For all $n \ge 4$, we have $A_n(3241) \le A_n(3421)$. Furthermore, we have equality if and only if $n = 4$. \label{gtfrcjhy}
\end{prop}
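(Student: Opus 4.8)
The plan is to compute both popularity sequences explicitly using the recursive decomposition of $132$-avoiding permutations around the position of $n$, and then compare them directly. For any pattern $\tau$ of length $4$ and any $\sigma \in S_n(132)$ with $\sigma_m = n$, each occurrence of $\tau$ either uses $n$ or does not. Occurrences not using $n$ split across the two $132$-avoiding blocks $\sigma_1\ldots\sigma_{m-1}$ and $\sigma_{m+1}\ldots\sigma_n$; occurrences using $n$ force $n$ to play the role of the unique left-to-right maximum among the chosen entries, which for both $3241$ and $3421$ means $n$ plays the top value. The standard approach is therefore to set up a recurrence for $A_n(\tau)$ by summing over the split point, using the Catalan-number count $C_n = |S_n(132)|$ and the already-available values $A_m(\rho)$ for the relevant length-$3$ and length-$2$ sub-patterns obtained by deleting one entry from $3241$ or $3421$.

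First I would record the sub-pattern data. Deleting entries from $3241$ gives the length-$3$ patterns one sees when $n$ occupies each of the four roles; likewise for $3421$. The key structural observation is that both $3241$ and $3421$ end in the pattern so that $n$, when used in an occurrence, must sit at the left (as the ``3''-or-``4''-valued leading large entry) — more precisely, because every entry left of $n$ exceeds every entry right of $n$ in a $132$-avoider, $n$ can only realize a role whose value is the maximum of the pattern and whose position admits large entries to its left and small entries to its right. I would determine for each of the two patterns exactly which role $n$ can fill, express the ``uses $n$'' contribution as a convolution sum $\sum_{\alpha} A_{\alpha}(\text{prefix pattern})\,A_{n-1-\alpha}(\text{suffix pattern})$, and express the ``avoids $n$'' contribution as $\sum_{\alpha} C_{\alpha} A_{n-1-\alpha}(\tau) + A_{\alpha}(\tau) C_{n-1-\alpha}$ plus cross terms where the four entries straddle $n$ without using it. Since $3241$ and $3421$ share many length-$3$ sub-patterns that are equipopular by B\'ona's result $A_m(213)=A_m(231)=A_m(312)$, most convolution terms will cancel in the difference $A_n(3421)-A_n(3241)$.

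After cancellation I expect the difference to reduce to a single clean convolution sum whose summands are manifestly nonnegative, established by the equipopularity identities and by $A_m(321)\ge A_m(\rho)$ for the other length-$3$ patterns $\rho$ (B\'ona's bound). I would then show each summand is strictly positive for exactly one choice of split point once $n>4$, giving strict inequality, while at $n=4$ the sum is empty or vanishes, giving equality; this pins down the ``equality iff $n=4$'' clause. A cleaner alternative that I would pursue in parallel is to reuse the injection $\psi$ machinery: since $3421 = 3241^{\,4}$ would hold if the move were defined, one checks directly whether $3421$ is obtained from $3241$ by an allowable insertion, but since their spine structures $\langle 2,2\rangle$ and $\langle 3,1\rangle$ are incomparable this single-move route is unavailable, which is precisely why a direct generating-function or convolution computation is forced.

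The main obstacle will be organizing the ``avoids $n$'' occurrences that straddle the split point $m$: an occurrence of a length-$4$ pattern not using $n$ may place some entries left of $n$ and some right of $n$, and one must enumerate all such splits consistently for both patterns and verify the straddling contributions are identical (so they cancel) rather than merely bounded. Handling these mixed terms correctly — ensuring that the large-left/small-right constraint of $132$-avoidance is applied to each admissible split of the four roles across position $m$ — is where the bookkeeping is delicate; I would manage it by tabulating, for each of the $\binom{4}{r}$ ways of assigning $r$ of the four pattern entries to the left block, whether that assignment is realizable (the left-assigned values must all exceed the right-assigned values as dictated by the pattern) and checking that the realizable assignments and their induced sub-patterns coincide for $3241$ and $3421$ except in the one role that produces the surviving positive term.
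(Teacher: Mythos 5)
Your plan is essentially the paper's own proof: it decomposes $A_n(3241)$ and $A_n(3421)$ according to where $n$ sits relative to an occurrence, matches the resulting convolution terms via induction (whole-pattern terms) and equipopularity of the length-$3$ subpatterns such as $324$ and $342$, and isolates the one surviving positive term $\sum_i A_i(34)A_{n-1-i}(21)$ for $3421$, which is empty at $n=4$ and strictly positive for $n\ge 5$. One small correction to your bookkeeping: the admissible straddling splits are prefix/suffix splits of the pattern in position (not arbitrary $\binom{4}{r}$ subsets of the four roles), subject to the prefix values all exceeding the suffix values, which leaves exactly one such split for $3241$ and two for $3421$.
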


\begin{proof}
We assume by induction that for all $k \in \{4,5,\ldots,n-1\}$ we have $A_k(3241) \le A_k(3421)$. This is true for $k=4$ since $A_4(3241) = A_4(3421) =1$.\\

We let $\tau=3241$, and we let $\mu=3421$. We will consider possible occurrences of $\tau$ and $\mu$ in permutations in $S_n(132)$ for $n \ge 4$.\\

For all $n \ge 4$, the following expression holds: \begin{align*}A_n(\tau) = &\sum_{i=4}^{n-1}A_i(3241)|S_{n-1-i}(132)| + \sum_{i=3}^{n-2}A_i(324)A_{n-1-i}(1)\\+&\sum_{i=2}^{n-2}A_i(32)A_{n-1-i}(1).\end{align*}The sum of the first two terms in the number of occurrences of $\tau$ in which $n$ is not an element in the occurrence. The first term is the number of occurrences in which the element $n$ lies to the right of elements in the occurrence of $\tau$, and the second term is the sum of occurrences in which the element $n$ lies between the entry that occurs as $4$ and the entry that occurs as $1$. The final term is the number of occurrences in which $n$ occurs as $4$. Also, for all $n \ge 4$:

\begin{align*}A_n(\mu) = &\sum_{i=4}^{n-1}A_i(3421)|S_{n-1-i}(132)| + \sum_{i=3}^{n-2}A_i(342)A_{n-1-i}(1)\\
+&\sum_{i=2}^{n-3}A_i(34)A_{n-1-i}(21)+\sum_{i=1}^{n-3}A_i(3)A_{n-1-i}(21).\end{align*} The sum of the first three terms in the summation is the number of occurrences of $\mu$ in which $n$ is not an element in the occurrence. The first term is the number of occurrences in which the entry $n$ is to the right of all elements in the occurrence of $\mu$. The second term is the number of occurrences of $\mu$ in which the entry $n$ lies to the right of the entry that occurs as 2, and to the left of the entry that occurs as $1$. The third term is the number of occurrences in which the entry $n$ lies between the entry that occurs as $4$ and the entry that occurs as $2$. The fourth term is the number of occurrences in which $n$ occurs as $4$.\\

By induction we have that $$\sum_{i=4}^{n-1}A_i(3241)|S_{n-1-i}(132)| \le \sum_{i=4}^{n-1}A_i(3421)|S_{n-1-i}(132)|.$$ By Theorem \ref{hgchcv} we have that $$\sum_{i=3}^{n-2}A_i(324)A_{n-1-i}(1) =\sum_{i=3}^{n-2}A_i(342)A_{n-1-i}(1)$$ since $T(324)$ and $T(342)$ have the same spine structure. Also $$\sum_{i=2}^{n-2}A_i(32)A_{n-1-i}(1) = \sum_{i=1}^{n-3}A_i(3)A_{n-1-i}(21).$$ Since $\sum_{i=2}^{n-3}A_i(34)A_{n-1-i}(21) > 0$ this implies that 
$A_n(\tau) <A_n(\mu)$.
\end{proof}  

\begin{corol}
There exists permutations $\tau$ and $\mu$ of the same length $k$, such that for all $n \ge k$ we have $A_n(\tau) \le A_n(\mu)$, and the spine structure of $T(\tau)$ is incomparable to the spine structure of $T(\mu)$ in refinement order. \label{tgrfcdgt}
\end{corol}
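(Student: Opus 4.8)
The plan is to produce an explicit witnessing pair rather than argue abstractly, and the natural choice is exactly the pair studied in Proposition \ref{gtfrcjhy}. I would take $k=4$, $\tau = 3241$, and $\mu = 3421$. The popularity inequality is then handed to us for free: Proposition \ref{gtfrcjhy} already shows $A_n(\tau) \le A_n(\mu)$ for all $n \ge 4$, so that half of the statement requires no additional work, and one could even invoke the strict inequality it supplies for $n > 4$ if a sharper witness were wanted.

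The only remaining task is to confirm that $S(T(3241)) = \langle 2,2 \rangle$ and $S(T(3421)) = \langle 3,1 \rangle$ are incomparable under $\le_{R}$. Here I would argue directly from the definition of the refinement order. These two spine structures are read off from the bijection $\sigma \mapsto T(\sigma)$ described at the start of Section \ref{kjbkjhbk}, and both have parts summing to $4$. Since a merge of subsets of parts can only decrease the number of parts, the only spine structure reachable by merging from $\langle 2,2 \rangle$ is $\langle 4 \rangle$, and likewise the only one reachable from $\langle 3,1 \rangle$ is $\langle 4 \rangle$. Hence neither of $\langle 2,2\rangle$ and $\langle 3,1\rangle$ can be obtained from the other, so the two are incomparable.

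Given that Proposition \ref{gtfrcjhy} supplies the inequality, there is no genuine obstacle: the corollary follows immediately from that proposition together with the elementary incomparability check just described. The single point meriting care is verifying that the two spine structures are indeed $\langle 2,2\rangle$ and $\langle 3,1\rangle$, which is a direct computation from the labelling procedure for $T(3241)$ and $T(3421)$; once these values are in hand, the fact that each admits only the single proper merge to $\langle 4\rangle$ makes incomparability transparent.
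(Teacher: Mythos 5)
Your proposal is correct and matches the paper's own (implicit) argument exactly: the paper derives the corollary from Proposition \ref{gtfrcjhy} applied to $\tau=3241$, $\mu=3421$, together with the observation that $S(T(3241))=\langle 2,2\rangle$ and $S(T(3421))=\langle 3,1\rangle$ are incomparable in refinement order. Your elementary check of incomparability (each structure's only nontrivial merge is $\langle 4\rangle$, so neither refines the other) is sound.
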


The counterexample given in Proposition \ref{gtfrcjhy} leads naturally to Question \ref{gtfrd} on the popularity of $132$-avoiding permutations. 
For all $n$, define the partial order $\le_{RL}$ on the set of spine structures of all binary trees in $\mathcal{T}_n$ (sequences $\langle a_1,\ldots, a_s\rangle$ such that $\sum_{i=1}^s a_i =n$) as follows. For any $T_1,T_2 \in \mathcal{T}_n$ with $ S(T_1)=\langle a_1,\ldots,a_s \rangle$, and $S(T_2)=\langle b_1,\ldots, b_t\rangle$, then $\langle a_1,\ldots,a_s \rangle \le_{RL} \langle b_1,\ldots, b_t\rangle$ (or equivalently $\langle b_1,\ldots,b_t\rangle \ge_{RL} \langle a_1,\ldots,a_s\rangle $) if there exists $j \in [s]$ such that $a_1=b_1,a_2 =b_2,\ldots,a_{j-1} =b_{j-1}$, and $a_j>b_j$. The $RL$ in $\le_{RL}$ stands for reverse lexicographic, since this ordering is also known as the reverse lexicographic ordering.   


\begin{ques}
\label{gtfrd} Let $\tau$ and $\mu$ be permutations in $S_k(132)$. Is it true that if $S(T(\tau)) \le_{RL} S(T(\mu))$, then for all $n\ge k$ we have $A_n(\tau) \le A_n(\mu)$? If this is true, do we have equality only when $n=k$?   
\end{ques}

\end{section}

\end{document}